\documentclass[12pt]{amsart}

\usepackage{amsmath, amssymb}
\usepackage[top=25truemm, bottom=25truemm, left=23truemm, right=23truemm]{geometry}
\usepackage{bm}
\usepackage{bbm}
\usepackage{color}
\usepackage{enumerate}
\usepackage{mathtools}
\usepackage{appendix}

\theoremstyle{definition}
\newtheorem{definition}{Definition}[section]

\theoremstyle{plain}
\newtheorem{theorem}[definition]{Theorem}
\newtheorem{lemma}[definition]{Lemma}
\newtheorem{corollary}[definition]{Corollary}
\newtheorem{proposition}[definition]{Proposition}

\newcommand{\AND}{\quad \text{\rm and} \quad}
\newcommand{\EXP}[2]{\mathbb{E}^{#1}\left[ #2 \right]}

\DeclareMathOperator{\supp}{supp}

\renewcommand{\phi}{\varphi}
\renewcommand{\Re}{\mathrm{Re}\hspace{1pt}}

\renewcommand{\epsilon}{\varepsilon}
\renewcommand{\mid}{~;~}

\begin{document}

\subjclass{Primary 11M32, Secondary 11M06}

\keywords{Euler--Zagier multiple zeta-functions, Value-distribution, Hybrid universality}

\title[Value-distribution of Euler--Zagier type multiple zeta-functions]{Value-distribution of Euler--Zagier type multiple zeta-functions with identical arguments on the critical line}
\author[K.~Endo]{K.~Endo}

\begin{abstract}
We consider the value-distribution of the Euler--Zagier type multiple zeta-functions with identical arguments on the critical line. 
Mishou showed that the set of the values of the Euler--Zagier double sum with identical arguments on the critical line is dense in the complex plane under the Riemann Hypothesis. 
In this paper, we generalize Mishou's result to the $r$-fold Euler--Zagier multiple zeta-functions with identical arguments for $r \geq 4$. 

\end{abstract}

\maketitle


\section{Introduction}

The Euler--Zagier multiple zeta-function is defined by
\[
\zeta_r (s_1, \dots, s_r)
= \sum_{1 \leq n_1 < \cdots < n_r} \frac{1}{ {n_1}^{s_1} \cdots {n_r}^{s_r}}.
\]
This series converges absolutely in the region $\Re(s_r) > 1, \Re (s_r + s_{r-1}) > 2, \ldots, \Re(s_1 + \cdots + s_r)>r$, which was proved by Matsumoto \cite{M2002}. 
Akiyama, Egami and Tanigawa \cite{AET2001} and Zhao \cite{Z2000} independently proved that $\zeta_r(s_1, \ldots, s_r)$ has a meromorphic continuation to the whole space $\mathbb{C}^r$. 
As a first step toward understanding the analytic nature of this function, 
it is natural to investigate the case in which all complex variables $s_1, \ldots, s_r$ are specialized to a single variable $s$. 
We write $\zeta_r(s)$ for
\[
\zeta_r(s) = \zeta_r ( \underbrace{s, \ldots, s}_{r} ).
\]
For example, Matsumoto and Sh{\=o}ji \cite{MS2014} carried out numerical computations on the zeros of $\zeta_2(s)$. 
They later studied the two-variable function $\zeta_2(s_1, s_2)$ both numerically and
theoretically in \cite{MS2020}. 
More recently, Matsumoto, Onozuka, and Wakabayashi \cite{MOW2026} obtained more detailed results on the zero distribution of $\zeta_2(s)$ from a theoretical perspective.
Nakamura and Pa\' nkowski \cite{NP20161} studied complex zeros of $\zeta_r (s)$ for $r \geq 2$. 
In the double case, they \cite{NP20162} also proved that $\zeta_2(s)$ has zeros for $\sigma>1$. 
The behavior of $\zeta_r (s)$ on the real axis has been investigated in \cite{K2006, MMT2022}.
In the present paper, we focus on the denseness of the values of $\zeta_r(s)$ on a vertical line.
For any $\sigma \in \mathbb{R}$, let $V_{r} (\sigma) = \left\{ \zeta_r \left( \sigma + it \right) \mid t \in \mathbb{R} \right\}$. 

First, we recall the case $r = 1$, which corresponds to the Riemann zeta-function $\zeta_1(s) = \zeta(s)$. 
For $\sigma > 1$, $V_1(\sigma)$ is bounded since $\zeta(\sigma + it)$ converges absolutely. 
Hence $V_1(\sigma)$ is not dense in $\mathbb{C}$. 
For any $1/2 < \sigma \leq 1$, Bohr and Courant \cite{BC1914} proved that $V_1(\sigma)$ is dense in $\mathbb{C}$. 
For any $\sigma < 1/2$,  Garunk\u{s}tis and Steuding \cite{GS2014} showed that $V_1(\sigma)$ is not dense in $\mathbb{C}$ under the Riemann Hypothesis by using the functional equation for $\zeta(s)$. 
As for the case $\sigma = 1/2$, whether the set $V_1(1/2)$ is dense in $\mathbb{C}$ remains open. 
Kowalski and Nikeghbali \cite[Corollary 9]{KN2012} showed that $V_1(1/2)$ is dense in $\mathbb{C}$ under a certain strong assumption with respect to the Keating--Snaith moment conjecture. 
Thus, the denseness problem on the line $\sigma = 1/2$ still appears to be far from being resolved.

Next, we consider the case $r \geq 2$, where the situation is similar to that for $r = 1$. 
For $\sigma > 1$, $V_r(\sigma)$ is bounded since $\zeta_r(\sigma + i t)$ converges absolutely.
Hence $V_r(\sigma)$ is not dense in $\mathbb{C}$ for $\sigma > 1$. 
For any $1/2 < \sigma < 1$, the denseness of $V_r(\sigma)$ can be derived by adapting the method of Nakamura and Pa\' nkowski \cite{NP20161}. 
Although their paper focuses exclusively on the distribution of zeros, their approach can be extended to obtain the denseness result.
They studied $\zeta_r (s)$ by using the harmonic product formula together with the hybrid universality theorem for $\zeta(s)$. 
Their method can also be adapted to the case $\sigma = 1$. 
We refer the reader to \cite{G1979, KK2007, NP2011, NP2012, NP20161, P2010} for the details of the hybrid universality theorem and its application to multivariable zeta-functions. 
For the case $\sigma < 1/2$, we can show $V_r (\sigma)$ is not dense in $\mathbb{C}$ under the Riemann Hypothesis by following an argument similar to Garunk\u{s}tis and Steuding \cite{GS2014}. 
Note that for the case $\sigma \leq 0$, one can obtain the result without the Riemann Hypothesis, which is essentially proved in \cite[Theorem 1.4]{MO2022}.
The proof will be given in Appendix~\ref{append:A}, as Theorem~\ref{thm:NDT}.

By analogy with the Riemann zeta-function, we refer to the line $\sigma=1/2$ as the critical line. 
This was the remaining case.
The first breakthrough in this direction was due to Mishou for $r=2$, 
presented in a 2017 conference talk \cite{M2017}.
By the harmonic product formula, 
we have $\zeta_2( s ) = \left( \zeta(s)^2 - \zeta(2s) \right) / 2$. 
Hence, under the Riemann Hypothesis, substituting
$s=1/2+i\gamma$ gives
\[
\zeta_2 \left( \frac{1}{2} + i\gamma \right)
=-\frac{1}{2}\zeta(1+2i\gamma), 
\] 
where $\gamma$ is the imaginary part of non-trivial zeros of $\zeta(s)$. 
Analyzing the values $\zeta(1 + 2 i \gamma)$, Mishou proved, under the Riemann Hypothesis, that $V_2(1/2)$ is dense in $\mathbb{C}$. 

In this paper, we generalize Mishou's result by combining the method of Nakamura and Pa\' nkowski \cite{NP20161}.

\begin{theorem}\label{thm:main1}
Assume the Riemann Hypothesis.
\begin{enumerate}[{ \rm (i)}]
\item The set
\[
\left\{ \zeta_r \left( \frac{1}{2} + i \gamma \right) \mid \text{\rm $\gamma$ is the imaginary part of non-trivial zeros of $\zeta(s)$} \right\}
\]
is dense in $\mathbb{C}$ for $r \geq 4$. 

\item The set
\[
\left\{ \zeta_3\left( \frac{1}{2} + i \gamma \right) \mid \text{\rm $\gamma$ is the imaginary part of non-trivial zeros of $\zeta(s)$} \right\}
\]
is bounded and hence not dense in $\mathbb{C}$. 
\end{enumerate}
\end{theorem}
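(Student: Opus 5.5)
The plan is to express $\zeta_r(s)$ through the harmonic product formula as a polynomial in $\zeta(s),\zeta(2s),\dots,\zeta(rs)$. By Newton's identities (elementary symmetric functions in terms of power sums),
\[
\zeta_r(s)=\sum_{\lambda\vdash r}\frac{\epsilon_\lambda}{z_\lambda}\prod_{j}\zeta(\lambda_j s),
\]
with $\epsilon_\lambda=(-1)^{r-\ell(\lambda)}$. At $s=\tfrac12+i\gamma$ we have $\zeta(s)=0$, so only partitions with all parts $\geq2$ survive. For $r=3$ the only such partition is $(3)$, giving $\zeta_3(\tfrac12+i\gamma)=\tfrac13\zeta(\tfrac32+3i\gamma)$. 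Since $\Re(3s)=3/2>1$, this is bounded in absolute value by $\zeta(3/2)/3$, which proves (ii).

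For $r\geq4$ the surviving terms involve $\zeta(2s)=\zeta(1+2i\gamma)$ and $\zeta(ks)$ for $k\geq3$. The latter are absolutely convergent Euler products on $\Re\geq 3/2$. For $r=4$ one gets
\[
\zeta_4\left(\tfrac12+i\gamma\right)=\tfrac18\zeta(1+2i\gamma)^2-\tfrac14\zeta(2+4i\gamma),
\]
and in general
\[
\zeta_r\left(\tfrac12+i\gamma\right)=c_r\,\zeta(1+2i\gamma)^{m}+(\text{lower powers of }\zeta(1+2i\gamma)\text{ with coefficients } \zeta(k s),\ k\ge3),
\]
where $m=\lfloor r/2\rfloor\ge2$. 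For even $r$ the leading term comes from the partition $(2,\dots,2)$; for odd $r$ it comes from $(2,\dots,2,3)$, whose coefficient is $\zeta(\tfrac32+3i\gamma)$ times a constant, and that factor is nonzero and bounded.

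The key step is a joint denseness result along zeros. We need that for any target $w\in\mathbb{C}$ with $w\neq0$ (denseness only needs an open set anyway) and any $\epsilon>0$, there are zeros $\gamma$ with $\zeta(1+2i\gamma)$ close to $w$ while each $\zeta(k(\tfrac12+i\gamma))$, $3\le k\le r$, is close to a prescribed admissible value, for instance near the value obtained by setting $p^{-ki\gamma}\approx1$ for small $p$. I would follow Mishou's approach: write $\log\zeta(1+2i\gamma)$ and $\log\zeta(k/2+ki\gamma)$ as Dirichlet polynomials over primes $p\le X$ plus a tail controlled in mean over zeros. Under RH, the tail is controlled by explicit-formula / Landau–Gonek type estimates for sums $\sum_{\gamma\le T}p^{-i\gamma}$. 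Then use Kronecker's theorem, in the equidistributed form along zeros that follows from the Landau–Gonek formula (the sequence $(\gamma\log p)_{p\le X}$ is uniformly distributed mod $1$ as $\gamma$ runs over zeros). Choose the phases $2\gamma\log p$ so that $\sum_p p^{-1-2i\gamma}$ approximates $\log w$; its divergence gives enough freedom. Because the contributions of small primes to $\zeta(ks)$, $k\ge3$, must simultaneously be kept fixed, I would use only large primes $p>P_0$ to steer $\zeta(1+2i\gamma)$. The large primes affect $\zeta(ks)$ only by $O(\sum_{p>P_0}p^{-3/2})$, which is small, while $\sum_{p>P_0}p^{-1}$ still diverges and leaves enough room to reach any value.

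Given this, the denseness of $\zeta_r$ follows from the polynomial structure. With the coefficients $b_j$ pinned near fixed values ($b_m\neq0$), the map $z\mapsto c\,b_m z^m+\dots+b_0$ is a nonconstant polynomial, hence surjective onto $\mathbb{C}$. Pick a root $z_0$ of $P(z)=w$ and approximate it by $\zeta(1+2i\gamma)$. The main obstacle is the simultaneous approximation along zeros with a quantitative tail estimate; it requires a uniform-distribution statement for $\{\gamma\log p\}$ over zeros together with a mean-square bound of the tail of the Euler product at $\Re=1$ over zeros. I would try to derive the mean-square bound from the Gonek–Landau formula under RH.
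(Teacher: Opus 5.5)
\textbf{Where your proposal agrees with the paper.} Your algebraic reduction is correct and matches the paper's. You use the Newton/Hoffman expansion, and under RH every term containing $\zeta(\tfrac12+i\gamma)$ vanishes. This gives $\zeta_3(\tfrac12+i\gamma)=\tfrac13\zeta(\tfrac32+3i\gamma)$ for (ii). For $r\ge 4$ it gives a polynomial of positive degree in $\zeta(1+2i\gamma)$, whose coefficients (products of $\zeta(k(\tfrac12+i\gamma))$, $k\ge 3$) are pinned near the corresponding products of $\zeta(k/2)$ by making $\|\gamma\log p/2\pi\|$ small for the first few primes. The paper does exactly this with $\theta_m=0$.

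One slip: for odd $r=2m+1$ the partition $(3,2,\dots,2)$ has only $m-1$ parts equal to $2$. So the degree is $\lfloor r/2\rfloor-1$ (e.g.\ degree $1$ for $r=5$), not $\lfloor r/2\rfloor\ge 2$. It is still positive for $r\ge 5$, so nothing breaks.

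\textbf{The gap.} The genuine gap is the joint approximation along zeros. This is the whole analytic content of the theorem (the paper's Theorem~\ref{thm:main2}), and you leave it at ``I would try''. Over \emph{all} zeros, the mean-square bound for the tail is exactly where the known obstruction lies. Up to $O(1/T)$, $\zeta(1+2i\gamma)-\zeta_X(1+2i\gamma)$ is a Dirichlet polynomial of length $\asymp T$, and both tools you might use fail on it:
\begin{enumerate}[(a)]
\item Gallagher's lemma or the large sieve at the zero spacing $\delta\asymp 1/\log T$ needs $N_\delta(\gamma)\ll 1$, i.e.\ control of clustering of zeros. This is why Garunk\v{s}tis, Laurin\v{c}ikas and Macaitien\.{e} had to assume the weak Montgomery conjecture.
\item Expanding the square and applying Gonek's uniform Landau formula to $\sum_{\gamma\le T}(n/m)^{2i\gamma}$ produces error terms that grow with $x=(n/m)^2\le T^2$. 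Estimated directly, these are not negligible against $N(T)\asymp T\log T$.
\end{enumerate}

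\textbf{The missing idea.} The paper takes it from Sourmelidis--Srichan--Steuding: denseness needs only \emph{some} zeros. Littlewood's bound $|T-\gamma|<16/\log\log\log T$ yields a subsequence $\gamma_{n_k}=k+o(1)$. Along this subsequence:
\begin{enumerate}[(a)]
\item Equidistribution of $(\gamma_{n_k}\log p/2\pi)_{p\le X}$ reduces to geometric sums $\frac1N\sum_{k\le N}q^{-ik}\to 0$. These tend to zero because $\log q\notin 2\pi\mathbb{Z}$ for rational $q\ne 1$, by Gelfond--Schneider.
\item The points $2\gamma_{n_k}$ are well spaced, so Gallagher's lemma with $\delta=1/4$ turns the discrete mean square into the continuous one, which Carlson's theorem controls.
\item Weak convergence to $(\zeta(1,\omega^2),(\omega(p_m))_{m\le M})$, together with full support of the limit, then gives a positive proportion of good $k$.
\end{enumerate}

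Your all-zeros route could probably also be completed under RH by first replacing $\log\zeta(1+it)$ with a prime-power sum of length $(\log t)^{A}$. That makes all off-diagonal Landau--Gonek errors polylogarithmic in $T$, but this input is not in your proposal.
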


The following corollary immediately follows from Theorem~\ref{thm:main1}.

\begin{corollary}
Assume the Riemann Hypothesis. 
Then, for any $r\geq4$, 
the set
\[
\left\{ \zeta_r \left(\frac{1}{2} + i t \right) \mid t \in \mathbb{R} \right\}
\]
is dense in $\mathbb{C}$. 
\end{corollary}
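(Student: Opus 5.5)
The plan is to deduce the corollary directly from Theorem~\ref{thm:main1}(i) by a set inclusion, with no new analysis. Assume the Riemann Hypothesis and fix $r \geq 4$. Every non-trivial zero of $\zeta(s)$ then has the form $\frac{1}{2} + i\gamma$ with $\gamma \in \mathbb{R}$, so each imaginary part $\gamma$ is an admissible value of the real parameter $t$. This gives
\[
\left\{ \zeta_r \left( \frac{1}{2} + i \gamma \right) \mid \text{$\gamma$ is the imaginary part of non-trivial zeros of $\zeta(s)$} \right\}
\subseteq
\left\{ \zeta_r \left(\frac{1}{2} + i t \right) \mid t \in \mathbb{R} \right\} = V_r(1/2).
\]

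Two small points need checking. First, $\zeta_r(\frac{1}{2}+it)$ must be a well-defined finite value for every $t$ considered. I would use the harmonic product formula, which expresses $\zeta_r(s)$ as a polynomial in $\zeta(s), \zeta(2s), \dots, \zeta(rs)$. On $\Re(s) = 1/2$, the only possible singularities come from $\zeta(ks)$ at $ks = 1$, which forces $t = 0$ and $k = 2$. Since $t = 0$ is not the imaginary part of any non-trivial zero, no singular point enters the smaller set. Second, density passes to supersets: if $A \subseteq B \subseteq \mathbb{C}$ and $A$ is dense, then $\overline{B} \supseteq \overline{A} = \mathbb{C}$.

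Combining the inclusion with Theorem~\ref{thm:main1}(i), which says the left-hand set is dense in $\mathbb{C}$ for $r \geq 4$, shows that $V_r(1/2)$ is dense in $\mathbb{C}$. There is no real obstacle here. All the substance lies in Theorem~\ref{thm:main1}(i), and the only care needed is the bookkeeping about the point $t=0$ noted above.
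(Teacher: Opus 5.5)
Your proposal is correct and is the paper's route: the paper says the corollary follows immediately from Theorem~\ref{thm:main1}(i), which is exactly your inclusion of the values at $\frac{1}{2}+i\gamma$ into $V_r(1/2)$ followed by passing density to a superset. Your check that $t=0$ is not the ordinate of any zero is harmless bookkeeping the paper leaves implicit; $t=0$ is the only point on the line where $\zeta(2s)$, and hence possibly $\zeta_r(s)$, is singular.
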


The following theorem plays a crucial role in the proof of Theorem \ref{thm:main1}.

\begin{theorem}\label{thm:main2}
Let $0 < \gamma_1 \leq \gamma_2 \leq \cdots $ denote the positive imaginary parts of the zeros of the Riemann zeta-function, counted with multiplicity.
Then there exists a subsequence $(\gamma_{n_k})_{k = 1}^\infty$ of $\left( \gamma_n \right)_{n = 1}^\infty$ such that, 
for any positive integer $M$, $a \in \mathbb{C}$, $\theta_1, \ldots, \theta_M \in \mathbb{R}$ and $\epsilon>0$, 
we have
\begin{align*}
\liminf_{N \rightarrow \infty} \frac{1}{N} \# \Bigg{\{} 1 \leq k \leq N \mid & \left| \zeta \left( 1 + 2 i \gamma_{n_k} \right) - a \right| < \epsilon \AND \max_{1 \leq m \leq M}  \left\| \frac{\log p_m}{2 \pi}  \gamma_{n_k} - \theta_m \right\| < \epsilon \Bigg{\}} > 0. 
\end{align*}
Here $\# \left( A \right)$ denotes the cardinality of the set $A$, and $p_m$ denotes the $m$-th prime. 
\end{theorem}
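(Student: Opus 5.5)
We need to prove a joint distribution result for $\zeta(1+2i\gamma)$ and the phases $\gamma\log p_m/(2\pi)$ along a subsequence of zeros, with positive lower density. Note that Theorem~\ref{thm:main2} is unconditional. The plan is to combine (a) the uniform distribution of $(\gamma_n \log p_m/(2\pi))_m$ modulo one, by Rademacher/Hlawka and Fujii-type Landau formulas, with (b) an approximation of $\zeta(1+2i\gamma)$ by a short Euler product. The key point is that $\zeta(1+2it)$ is controlled by $\prod_{p\le P}(1-p^{-1-2it})^{-1}$, which depends only on the phases $\gamma\log p/(2\pi)$ for $p\le P$. So a condition on many more phases than the $M$ prescribed ones also forces $\zeta(1+2i\gamma)$ near $a$.

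\textbf{Step 1 (joint equidistribution).} First I would show that for every fixed $P$ the vector $\bigl(\gamma_n\log p/(2\pi)\bigr)_{p\le P}$ is uniformly distributed in $\mathbb{T}^{\pi(P)}$. By Weyl's criterion this reduces to showing $\sum_{\gamma_n\le T} e^{i\gamma_n \log x} = o(N(T))$ for each rational $x=\prod p^{k_p}\neq1$. This follows from Landau's formula $\sum_{0<\gamma\le T} x^{i\gamma} = -\frac{T}{2\pi}\Lambda(x)+O(\log T)$, with the analogous statement for $x<1$. Both sides are $o(T\log T)$, since $\Lambda(x)$ is bounded for fixed $x$ and vanishes for non-prime-powers. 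This step is unconditional.

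\textbf{Step 2 (Euler product approximation in mean over zeros).} Next I need a bound of the form
\[
\frac{1}{N(T)}\sum_{0<\gamma\le T}\Bigl|\log\zeta(1+2i\gamma)-\sum_{p\le P}-\log(1-p^{-1-2i\gamma})\Bigr|\ \ll\ \delta(P)+o_T(1),
\]
with $\delta(P)\to0$ as $P\to\infty$. Unconditionally I would write $\log\zeta(1+2it)$ for $t$ off a small exceptional set via a smoothed Dirichlet polynomial $\sum_{n\le X}\Lambda(n)n^{-1-2it}/\log n$ with $X$ a small power of $T$, using zero-density estimates near $\sigma=1$. The tail $\sum_{P<p\le X}$ would then be estimated in mean square over $\gamma$ via the Montgomery–Vaughan type or Gonek's discrete mean value $\sum_{\gamma\le T}|\sum a_n n^{-2i\gamma}|^2 \approx N(T)\sum|a_n|^2 + (\text{terms involving } \Lambda)$. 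This gives roughly $N(T)\sum_{p>P}p^{-2}$ plus lower-order cross terms. I expect this step to be the main obstacle: handling the discrete mean over zeros of a long Dirichlet polynomial, together with the exceptional set of zeros near which $\zeta(1+2i\gamma)$ is not well approximated. My fallback is to avoid logarithms and approximate $\zeta(1+2i\gamma)$ directly by $\sum_{n\le T}n^{-1-2i\gamma}$, then compare this with a finite Euler product via a Bohr–Jessen type argument in mean square.

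\textbf{Step 3 (choosing targets and the subsequence).} Given $a\neq0$, I would use Kronecker-type density (the primes are linearly independent over $\mathbb{Q}$, and $\sum p^{-1}$ diverges) to choose phases $(\phi_p)_{M<p\le P}$ so that $\prod_{p\le P}(1-p^{-1}e^{-2\pi i\cdot 2\phi_p})^{-1}$, with $\phi_{p_m}=\theta_m$ for the fixed primes, lies within $\epsilon/2$ of $a$ and stays so on a small open box. The case $a=0$ is handled by approximation, with the whole set being closed under limits at the level of the $\epsilon$-statement. Step 1 then gives a positive proportion of zeros with phases in that box, and by Step 2 with Chebyshev's inequality all but a small proportion of these satisfy the $\zeta$-condition.

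The subsequence $(\gamma_{n_k})$ must be independent of $M,a,\theta,\epsilon$. The simplest choice is to take the full sequence, $n_k=k$, if Step 2 holds for all zeros. Otherwise I would discard the density-zero exceptional set of Step 2 via a diagonal argument over $T$, removing those zeros where the approximation fails for a sequence $P_j\to\infty$ at rates chosen so the removed set has density zero. The lower density of the good set is then positive by the above, since removing a density-zero set does not change the $\liminf$.
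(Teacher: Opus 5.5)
\textbf{Gap in Step 3.} The argument breaks at the sentence ``by Step 2 with Chebyshev's inequality all but a small proportion of these satisfy the $\zeta$-condition.'' Chebyshev applied to Step 2 bounds the density of bad zeros among \emph{all} zeros by about $\delta(P)|a|/\epsilon$. The zeros you are counting have density about $|B|$, the Haar measure of your box $B\subset\mathbb{T}^{\pi(P)}$. Nothing in Steps 1--2 prevents the bad zeros from lying inside $B$, so you would need $\delta(P)|a|/\epsilon<|B|$.

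Because the box and the Euler-product approximation use the same $P$, this cannot be arranged. $\delta(P)$ decays only like a power of $P$ (about $(P\log P)^{-1/2}$). A cube has $|B|=(2\eta)^{\pi(P)}$ with $\eta\lesssim\epsilon/(|a|\log\log P)$, which decays faster than any power of $P$. Enlarging $P$ makes the inequality worse, and at the smallest $P$ that lets you reach $a$ there is no reason for it to hold.

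The missing ingredient is asymptotic independence between the box coordinates and the tail. A repair using only Steps 1--2 needs three parameters chosen in order:
\begin{enumerate}[(a)]
\item Take $P$ large at the outset.
\item Fix the box $B$.
\item Choose a second cutoff $Q$ with $\delta(Q)|a|/\epsilon<|B|/4$.
\end{enumerate}
Then apply Step 1 on $\mathbb{T}^{\pi(Q)}$, and use Chebyshev \emph{on the torus}. The variance there is at most about $\sum_{p>P}p^{-2}$, which is small uniformly in $Q$ once $P$ is large. This shows $\prod_{P<p\le Q}(1-x_p^2/p)^{-1}$ is within tolerance of $1$ on at least half of $B\times\mathbb{T}^{\pi(Q)-\pi(P)}$, giving good density at least $|B|/2-|B|/4>0$. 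This decoupling is exactly what the paper's Proposition~\ref{prop:LT} provides. It proves weak convergence to the random model $\zeta(1,\omega^2)$, with the truncation $X$ sent to infinity only after the test function is fixed. It then gets full support of $\mu$ from independence (Lemmas~\ref{lem:DL} and \ref{lem:E2026}).

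\textbf{Two smaller points.} First, Landau's formula is for $\sum_{0<\gamma\le T}x^{\rho}$, not $x^{i\gamma}$. Without RH you need a zero-density estimate to pass between them (this is Hlawka's unconditional equidistribution theorem). So Step 1's conclusion is correct, but not for the reason you give.

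Second, Step 2 is not the real obstacle. For the full sequence, use Gallagher's lemma (Lemma~\ref{lem:GL}) with $\delta=1$, together with $N_1(t)\ll\log T$, $N(T)\asymp T\log T$ and Carlson's mean value theorem on $\sigma=1$. This gives the mean-square approximation of $\zeta(1+2i\gamma)$ by $\zeta_X(1+2i\gamma)$ with no logarithms and no zero-density estimates near $\sigma=1$.

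The paper avoids even this by passing to Littlewood's subsequence $\gamma_{n_k}=k+o(1)$. Equidistribution then reduces to that of $(k\log x/2\pi)$ via Gelfond--Schneider, and the points $2\gamma_{n_k}$ are well spaced. Your full-sequence route, once Step 3 is repaired, would give the statement with $n_k=k$, at the price of Hlawka's theorem.
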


The discrete universality theorem with vertical shifts by imaginary parts $\left( \gamma_n \right)_{n = 1}^\infty$ of non-trivial zeros of the Riemann zeta-function was first proved by Garunk\v{s}tis, Laurin\v{c}ikas, and Macaitien\.{e} \cite{GLM2017} under the weak Montgomery conjecture. 
Sourmelidis, Srichan, and Steuding \cite{SSS2022} later removed this assumption by taking a suitable subsequence of $\left( \gamma_n \right)_{n = 1}^\infty$ and obtained a result for $L$-functions in the Selberg class.
Recently, Nakai \cite{N2025} extended the former type of result to Matsumoto zeta-functions under the weak Montgomery conjecture. 
In this paper, we adapt the method in \cite{SSS2022} to prove Theorem \ref{thm:main2}.

\section{Proof of Theorem \ref{thm:main2}}

\subsection{Probabilistic setting and the statement of the limit theorem}

First, we use the following lemma. 

\begin{lemma}[Littlewood \cite{L1924}]
For every large $T>0$, $\zeta(s)$ has a zero $\beta + i \gamma$ satisfying 
\[
\left| T - \gamma \right| < \frac{ 16 }{ \log \log \log T }.
\]
\end{lemma}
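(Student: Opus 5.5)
This is a classical result, and in the paper it can simply be quoted (see Titchmarsh, \emph{The Theory of the Riemann Zeta-Function}, Theorem 9.12). If I were to reprove it, I would argue by contradiction using harmonic-function estimates for $\log\zeta(s)$ in a thin horizontal strip. The overall shape of the argument is fixed; the quantitative calibration in the third paragraph is the step I have not checked, and I may have to fall back on the iterated-disc version cited above.

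Suppose that for some large $T$ there is no zero $\beta+i\gamma$ with $|\gamma-T|<\delta$, where $\delta=16/\log\log\log T$. The trivial zeros lie on the real axis, so $\log\zeta(s)$, defined by continuation from $\sigma=+\infty$, is holomorphic in the half-strip $S=\{\sigma>-K,\ |t-T|<\delta\}$ for any fixed $K$, say $K=2$.

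The two ends of the strip give bounds in opposite directions.
\begin{enumerate}[{\rm (a)}]
\item Upper bound throughout: by the convexity bound and the functional equation, $\log|\zeta(s)|=\Re\log\zeta(s)\le A\log T$ on $S$.
\item Smallness on the far right: $|\log\zeta(\sigma+it)|\le 2^{1-\sigma}$ for $\sigma\ge2$.
\item Size on the left: the functional equation gives $|\zeta(-1+iT)|\asymp T^{3/2}|\zeta(2-iT)|$, so $\Re\log\zeta(-1+iT)\ge \tfrac32\log T-O(1)$.
\end{enumerate}
First apply the Borel--Carath\'eodory theorem on discs of radius comparable to $\delta$ inside $S$. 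This converts the one-sided bound (a) into $|\log\zeta(s)|\le A'\delta^{-1}\log T$ on a slightly narrower strip $S'$.

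The core step is a Hadamard three-lines (harmonic measure) argument in $S'$. Map the strip to the unit disc by $w=\tanh\bigl(\pi(s-iT)/(4\delta)\bigr)$, or argue directly with the harmonic measure of the far-right segment $\sigma=X$. Seen from the point $-1+iT$, that segment has harmonic measure $\omega\approx e^{-\pi(X+1)/(2\delta)}$. Combining the bound from the first step on the long sides with (b) on $\sigma=X$ gives
\[
\log|\log\zeta(-1+iT)|\le \omega\,(-X\log2)+(1-\omega)\log\bigl(A'\delta^{-1}\log T\bigr).
\]
Comparing this with (c) should force $\omega\bigl(X\log 2+\log(A'\delta^{-1}\log T)\bigr)\le O(1)$. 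One then chooses $X$ to balance the terms: $X$ must be large enough that $2^{-X}$ beats $\log T$, which suggests $X\asymp\log\log T$, while still keeping $\omega$ from being too small.

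The main obstacle is exactly this calibration. A single application of three-lines with $X\asymp\log\log T$ appears to give only $\delta\gg\log\log T/\log\log\log T$, which is too weak. The triple logarithm has to come from iterating the estimate: apply Borel--Carath\'eodory and Hadamard on a chain of overlapping discs marching from $\sigma=2$ to $\sigma=-1$. Each step degrades the bound by a bounded power, so roughly $1/\delta$ steps produce an exponent of the shape $\exp(C/\delta)$. Requiring this still to beat $\log\log T$ gives $\delta\asymp 1/\log\log\log T$, and keeping track of constants along the chain is where the specific constant $16$ would be checked.
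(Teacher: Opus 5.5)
Your main plan matches the paper's: the lemma is not proved there but quoted from Littlewood's 1924 paper, and citing it (e.g.\ via Titchmarsh's book) is all that is required.

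Since you flag the calibration in your optional sketch as unchecked, here is where it goes wrong and how it closes. Comparing your harmonic-measure bound with (c) actually gives
\[
\omega\bigl(X\log 2+\log(A'\delta^{-1}\log T)\bigr)\le \log(A'/\delta)+O(1),
\]
not $\le O(1)$. You also do not need $2^{-X}$ to beat $\log T$. Put the right edge at $\sigma=2$, where $|\log\zeta|\le\log\zeta(2)<1$, so $\log|\log\zeta|<0$ there. Seen from $-1+iT$, that edge has harmonic measure $\asymp e^{-c/\delta}$ across a rectangle of length $\asymp 1$ and height $\asymp\delta$. The requirement therefore becomes $e^{-c/\delta}\log\log T\gg\log(1/\delta)$, which holds for $\delta=C/\log\log\log T$ with any $C>c$.

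So a single harmonic-measure estimate already yields the triple logarithm. It was the choice $X\asymp\log\log T$ that cost you the extra factor $e^{-cX/\delta}$. Your chain of $\asymp 1/\delta$ three-circle steps from $\sigma=2$ is just a discretization of the same decay $e^{-c/\delta}$, not a new ingredient, which is why it lands on the right answer.

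One further point: Borel--Carath\'eodory on discs of radius $\asymp\delta$ needs $|\log\zeta|$ at the centres, and you only control that for $\sigma\ge 2$. Propagating leftwards gives a bound of the shape $e^{C'/\delta}\log T$ rather than $A'\delta^{-1}\log T$. This is harmless: the condition becomes $e^{-c/\delta}\log\log T\gg 1/\delta$, which is still satisfied for $\delta\asymp 1/\log\log\log T$.
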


The following corollary follows immediately from the above. 

\begin{corollary}\label{cor:LL}
There exists a subsequence $\left( \gamma_{n_k} \right)_{k = 1}^\infty$ of $\left( \gamma_{n} \right)_{n = 1}^\infty$ such that $ \gamma_{n_k} = k + o(1)$ as $k \rightarrow \infty$. 
\end{corollary}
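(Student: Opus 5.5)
The corollary comes directly from Littlewood's lemma. The plan is to pick, for each large integer $k$, a zero whose ordinate is very close to $k$, and then to check that these zeros can be indexed by a strictly increasing sequence $n_k$.

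First, the lemma gives some $k_0$ such that for every integer $k \geq k_0$ there is a zero $\beta_k + i\gamma'_k$ of $\zeta(s)$ with
\[
|k - \gamma'_k| < \frac{16}{\log\log\log k}.
\]
For $k\ge k_0$ the right-hand side is less than $1/2$ (enlarge $k_0$ if necessary) and tends to $0$. In particular $\gamma'_k>0$, so $\gamma'_k$ appears in the sequence $(\gamma_n)$. Choose an index $m_k$ with $\gamma_{m_k}=\gamma'_k$. If the zero is multiple, any one of its indices will do.

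Second, the indices must be strictly increasing. For $k\ge k_0$ we have $\gamma'_k\in(k-1/2,\,k+1/2)$. These intervals are pairwise disjoint, so $\gamma'_k<\gamma'_{k+1}$. Since $(\gamma_n)$ is nondecreasing, $\gamma_{m_k}<\gamma_{m_{k+1}}$ forces $m_k<m_{k+1}$. Thus $(\gamma_{m_k})_{k\ge k_0}$ is a genuine subsequence of $(\gamma_n)$ with $\gamma_{m_k}=k+o(1)$.

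Third, the indexing has to start at $k=1$ while keeping $\gamma_{n_k}=k+o(1)$. For $1\le k<k_0$, pick strictly increasing indices $n_1<\dots<n_{k_0-1}<m_{k_0}$; there are enough, since $m_{k_0}$ can be taken large (the statement constrains nothing at finitely many $k$). For $k\ge k_0$, set $n_k=m_k$. The finitely many initial terms do not affect the asymptotic $\gamma_{n_k}=k+o(1)$.

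There is no real obstacle here. The only point needing care is the monotonicity of the indices, and that follows from the disjointness of the windows around consecutive integers.
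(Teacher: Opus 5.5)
Your construction is the paper's own: apply Littlewood's lemma at $T=k$. The paper simply calls the corollary immediate and leaves implicit the point where your argument is incomplete. Your first two steps are fine. The windows $(k-1/2,k+1/2)$ are disjoint, so $\gamma'_k<\gamma'_{k+1}$, and monotonicity of $(\gamma_n)$ forces $m_k<m_{k+1}$.

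The gap is in the third step. To insert indices $n_1<\dots<n_{k_0-1}<m_{k_0}$ you need $m_{k_0}\ge k_0$. Saying ``$m_{k_0}$ can be taken large'' does not give this, because enlarging $k_0$ also enlarges the number $k_0-1$ of slots you must fill. This is not a formality. Any subsequence has $n_k\ge k$, hence $\gamma_{n_k}\ge\gamma_k$, so the corollary really asserts $\gamma_k\le k+o(1)$. The only information you extract from the lemma is an ordinate within $o(1)$ of each large integer. The model sequence $\gamma_n=n+10$ satisfies this equally well. Your first two steps go through for it with $m_k=k-10$, yet no subsequence with $\gamma_{n_k}=k+o(1)$ exists.

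What is missing is a lower bound for the number $N(T)$ of indices $n$ with $\gamma_n\le T$, namely $N(T)\ge T$ for large $T$. Given this, every $n$ with $\gamma_n\le k_0-1/2$ satisfies $n<m_{k_0}$. Hence $m_{k_0}>N(k_0-1/2)\ge k_0$ once $k_0$ is large, and you can simply take $n_k=k$ for $k<k_0$.

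You can get the bound in either of two ways. It follows from the Riemann--von Mangoldt formula $N(T)\sim\frac{T}{2\pi}\log T$. It also follows from Littlewood's lemma itself, provided you apply it at all large real points rather than only at integers. Put $\eta(T)=16/\log\log\log T$; this decreases to $0$, so the radius at any centre $c\ge T$ is at most $\eta(T)$. Apply the lemma at the centres of $\lfloor 1/(2\eta(T))\rfloor$ disjoint open subintervals of $(T,T+1)$, each of length $2\eta(T)$. This yields that many distinct ordinates in $(T,T+1)$. Hence $N(T+1)-N(T)\to\infty$, so $N(T)/T\to\infty$.
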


In what follows, we fix a subsequence $\left( \gamma_{n_k} \right)_{k = 1}^\infty$ of $\left( \gamma_{n} \right)_{n = 1}^\infty$ as in Corollary \ref{cor:LL}.

Next, we give a probabilistic setting. 
Let $\mathcal{P}$ denote the set of all prime numbers and let $\mathbb{T} = \{ z \in \mathbb{C} \mid | z | = 1 \}$. 
Define $\Omega = \prod_{p \in \mathcal{P}} \mathbb{T}_{p}$, 
where $\mathbb{T}_p = \mathbb{T}$ for all prime numbers $p$. 
Since $\mathbb{T}_p$ is a compact Hausdorff topological abelian group, 
there exists the probability Haar measure $\bm{m}_p$ on $\left( \mathbb{T}_p, \mathcal{B} \left( \mathbb{T}_p \right) \right)$. 
Using the Kolmogorov extension theorem \cite[Theorem 14.39]{K2020}, 
we can construct the probability Haar measure $\bm{m} = \otimes_{p \in \mathcal{P}} \bm{m}_p$ on $\left( \Omega, \mathcal{B} ( \Omega ) \right)$. 

Let $M$ be a positive integer. 
We define a metric on $\mathbb{C} \times \prod_{m = 1}^M \mathbb{T}_{p_m}$ as follows: 
For $e^{2 \pi i \theta_1 }, e^{2 \pi i \theta_2} \in \mathbb{T}$, 
we define a metric $d_{\mathbb{T}}$ on $\mathbb{T}$ by
\[
d_{\mathbb{T}} \left( e^{2 \pi i \theta_1 }, e^{2 \pi i \theta_2} \right)
= \left\| \theta_1 - \theta_2 \right\|,
\]
where $\left\| x \right\|$ is given by $ \left\| x \right\| = \min_{n \in \mathbb{Z}} \left| x - n \right|$ for $x \in \mathbb{R}$. 
Using this metric, we define a metric on $\mathbb{C} \times \prod_{m = 1}^M \mathbb{T}_{p_m}$ by
\[
d \left( \left( a, (x_{p_m})_{m = 1}^M \right), \left( b, (y_{p_m})_{m = 1}^M \right) \right)
= | a - b | + \sum_{m =1}^M d_{\mathbb{T}} \left( x_{p_m}, y_{p_m} \right)
\]
for $\left( a, (x_{p_m})_{m = 1}^M \right), \left( b, (y_{p_m})_{m = 1}^M \right) \in \mathbb{C} \times \prod_{m = 1}^M \mathbb{T}_{p_m}$. 

Let 
\[
\mathcal{P}_X 
= \left\{ p \in \mathcal{P} \mid \text{$p$ divides $\displaystyle \prod_{n \leq X} n$} \right\}. 
\]
For $\omega \in \Omega$, let $\omega(1) = 1$ and 
\[
\omega(n) = \omega(p_1)^{\nu(n:p_1)} \cdots \omega(p_k)^{\nu(n:p_k)}, 
\]
where $n \geq 2$ and $n = p_1^{\nu(n:p_1)} \cdots p_k^{\nu(n:p_k)}$ is the prime factorization of $n$. 
Then the Menshov--Rademacher theorem \cite[Theorem B.10.5]{K2021} implies that
\[
\zeta(1,\omega^2)
=\sum_{n=1}^{\infty}\frac{\omega(n)^2}{n}
\]
converges almost everywhere.

We define the probability measures $\mu_N$ and $\mu$ on $\left( \mathbb{C} \times \prod_{m = 1}^M \mathbb{T}_{p_m}, \mathcal{B} \left( \mathbb{C} \times \prod_{m = 1}^M \mathbb{T}_{p_m} \right) \right)$ by 
\[
\mu_N(\bm{A}) 
= \frac{1}{N} \# \left\{ 1 \leq k \leq N \mid \left( \zeta \left( 1 + 2 i  \gamma_{n_k} \right), \left( p_m^{ - i  \gamma_{n_k} } \right)_{m = 1}^M \right) \in \bm{A} \right\},
\]
\[
\mu(\bm{A})
= \bm{m} \left( \omega \in \Omega \mid \left( \zeta \left( 1, \omega^2 \right), \left( \omega(p_m) \right)_{m = 1}^M \right) \in \bm{A} \right).
\]

\begin{proposition}\label{prop:LT}
We have the following two statements.
\begin{enumerate}[{\rm (i)}]

\item The probability measures $ \mu_N $ converge weakly to $\mu$ as $N \rightarrow \infty$. 

\item The support of the probability measure $\mu$ coincides with $\mathbb{C} \times \prod_{m = 1}^M \mathbb{T}_{p_m}$. 

\end{enumerate}
\end{proposition}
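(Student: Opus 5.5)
For (i), I will follow the standard Bagchi--Bohr--Jessen scheme, adapted to the shifts $\gamma_{n_k}=k+o(1)$ as in \cite{SSS2022}. Fix a smooth cutoff: for $Y>0$ put $v_Y(n)=\exp(-(n/Y)^{1/2})$ and define
\[
\zeta_Y(1+2it)=\sum_{n\ge1}\frac{v_Y(n)}{n^{1+2it}}, \qquad \zeta_Y(1,\omega^2)=\sum_{n\ge1}\frac{v_Y(n)\omega(n)^2}{n}.
\]
The proof splits into three steps.

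\emph{Step 1: a limit theorem on the torus.} Show that the points $\bigl(p^{-i\gamma_{n_k}}\bigr)_{p\in\mathcal{P}_X}$, $k\le N$, become equidistributed in $\prod_{p\in\mathcal{P}_X}\mathbb{T}_p$ with respect to Haar measure. By Weyl's criterion, it suffices to show that for any nonzero integer vector $(\ell_p)$,
\[
\frac1N\sum_{k\le N}\exp\Bigl(-i\gamma_{n_k}\sum_p \ell_p\log p\Bigr)\to0 .
\]
Writing $\gamma_{n_k}=k+\delta_k$ with $\delta_k\to0$, the factor $e^{-i\delta_k\lambda}$ (where $\lambda=\sum_p\ell_p\log p$) tends to $1$. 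So the average equals that of $e^{-ik\lambda}+o(1)$. Since $\lambda\neq0$ and $\lambda\notin2\pi\mathbb{Z}$ (because $e^{\lambda}$ is a nonone rational number, while $e^{2\pi m}$ is transcendental for $m\ne0$), this geometric average tends to $0$. Composing with the continuous map $\omega\mapsto(\zeta_Y(1,\omega^2),(\omega(p_m))_{m\le M})$, after truncating $\zeta_Y$ to $n\le X$ with a uniformly small error since the weights decay fast, gives weak convergence of the measures attached to $(\zeta_Y(1+2i\gamma_{n_k}),(p_m^{-i\gamma_{n_k}}))$ to the law $\mu^{(Y)}$ of $(\zeta_Y(1,\omega^2),(\omega(p_m)))$.

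\emph{Step 2: approximation.} I need
\[
\lim_{Y\to\infty}\limsup_{N\to\infty}\frac1N\sum_{k\le N}\bigl|\zeta(1+2i\gamma_{n_k})-\zeta_Y(1+2i\gamma_{n_k})\bigr|=0
\]
and $\zeta_Y(1,\omega^2)\to\zeta(1,\omega^2)$ in $L^2(\bm m)$. The random part follows from orthogonality of $\omega(n)^2$ (note $n\mapsto n^2$ is injective) and $\sum_n(1-v_Y(n))^2/n^2\to0$. For the deterministic part, I use the Mellin representation
\[
\zeta(s)-\zeta_Y(s)=\frac{1}{2\pi i}\int_{(-1/4)}\zeta(s+z)\,2\Gamma(2z)Y^z\,dz
\]
minus a residue at $z=1-s$, which is negligible for $|t|$ large. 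This reduces the problem to a discrete mean value $\frac1N\sum_{k\le N}|\zeta(3/4+2i\gamma_{n_k}+iu)|$. This is the main obstacle: the shifts are not exactly the integers. I will handle it with Gallagher's lemma, or a Sobolev-type inequality $|f(k+\delta)|\le\int_{|x-k|\le1/2}(|f|+|f'|)$, to compare with the continuous mean $\int_1^{N}|\zeta(3/4+2it+iu)|\,dt\ll N(1+|u|)$, which is known. The integral against $|\Gamma(2z)|$ converges, and the factor $Y^{-1/4}\to0$. Then the standard argument (for example \cite[Theorem 4.2]{}-type, or directly via Lipschitz test functions and the portmanteau theorem) yields $\mu_N\to\mu$.

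\emph{Part (ii).} Since $\mu$ is the image of $\bm m$, and $\bm m$ factors as the law of $(\omega(p_m))_{m\le M}$ times an independent part, it suffices to show two things: for fixed $(x_m)$, the support of $\sum_{p>p_M}\log(1-\omega(p)^2/p)^{-1}$ (equivalently $\zeta(1,\omega^2)$ divided by the finite Euler factor) is all of $\mathbb{C}$; and the finite factor never vanishes. For the first, write the tail as $\sum_{p}\omega(p)^2/p+(\text{absolutely convergent})$. The variables $\omega(p)^2$ are again uniform on $\mathbb{T}$, and $\sum1/p=\infty$ while $\sum1/p^2<\infty$. The classical rearrangement lemma (Bagchi's lemma, in its one-dimensional form) then shows that the set of convergent sums $\sum a_p/p$ with $|a_p|=1$ is dense in $\mathbb{C}$. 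Together with the independence/Kolmogorov three-series support lemma, the support of the log is $\mathbb{C}$. Exponentiating gives $\mathbb{C}\setminus\{0\}$, and its closure is $\mathbb{C}$. Multiplying by the nonvanishing continuous factor $\prod_{m\le M}(1-x_m^2/p_m)^{-1}$ preserves this, so the support is $\mathbb{C}\times\prod_m\mathbb{T}_{p_m}$.
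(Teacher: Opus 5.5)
Your proposal is correct and shares the paper's skeleton. Step 1 is Lemma~\ref{lem:UD} with $h=1$, and your transcendence of $e^{2\pi m}$ is the Gelfond--Schneider input the paper uses. The random-side $L^2$ bound via orthogonality of $\omega(n)^2=\omega(n^2)$ is Lemma~\ref{lem:MTR}, and the conclusion is the same three-term Lipschitz/portmanteau split. Part (ii) is the same rearrangement-plus-support-lemma argument followed by exponentiation and closure. Your conditioning on $(\omega(p_m))_{m\le M}$ is just an unpacked form of Lemmas~\ref{lem:DL} and~\ref{lem:E2026} together with the fact that the support of an image measure is the closure of the image of the support.

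The real difference is the deterministic approximation. The paper keeps the sharp cutoff $\zeta_X(s)=\sum_{n\le X}n^{-s}$ and stays on $\sigma=1$. Gallagher's Lemma~\ref{lem:GL} reduces the discrete mean square at the well-spaced points $2\gamma_{n_k}$ to continuous mean squares of $\zeta-\zeta_X$ and $\zeta'-\zeta_X'$ on $\sigma=1$. Carlson's theorem evaluates these as asymptotically $T\sum_{n>X}n^{-2}$ and $T\sum_{n>X}(\log n)^2n^{-2}$; since $1>1/2$, no smoothing or contour shift is needed.

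Your Bagchi-type route is also valid: the kernel $2\Gamma(2z)$, the shift to $\Re z=-1/4$, the exponentially small residue $2\Gamma(-4i\gamma)Y^{-2i\gamma}$, and then a Sobolev/Gallagher bound against first moments of $\zeta,\zeta'$ on $\sigma=3/4$. It gives an explicit rate $O(Y^{-1/4})$ and is the version that extends to functional limit theorems in strips. For this single-point statement it is simply more machinery than necessary. The integral term in your contour-shift formula should carry a minus sign, which is harmless once you take absolute values.

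One thing to make explicit in (ii): your ``equivalently'' uses the almost-sure identity $\zeta(1,\omega^2)=\exp(\log\zeta(1,\omega^2))$ between the random Dirichlet series and the random Euler product. The paper states this as a separate lemma, and you should record it too.
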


\subsection{Proof of Proposition \ref{prop:LT} (i)}

\begin{lemma}\label{lem:UD}
Let $\mathcal{P}_F$ be a finite subset of the set of prime numbers and let $h$ be a positive rational number. 
Let $\bm{m}_{\mathcal{P}_F} = \bigotimes_{p \in \mathcal{P}_F} \bm{m}_p$ denote the Haar probability measure on $\prod_{p \in \mathcal{P}_F} \mathbb{T}_p$.
We define the probability measure $\mathbb{H}_{N, \mathcal{P}_F}^{(h)}$ on $\left( \prod_{p \in \mathcal{P}_F} \mathbb{T}_p, \mathcal{B} \left( \prod_{p \in \mathcal{P}_F} \mathbb{T}_p \right) \right)$ by
\[
\mathbb{H}_{N,\mathcal{P}_F}^{(h)} \left( A \right)
= \frac{1}{N} \# \left\{ 1 \leq k \leq N \mid \left( p^{- i h \gamma_{n_k}} \right)_{p \in \mathcal{P}_F} \in A \right\}
\]
for $A \in \mathcal{B}\left( \mathbb{T}^{\mathcal{P}_F} \right)$. 
Then the probability measures $\mathbb{H}_{N, \mathcal{P}_F}^{(h)}$ converge weakly to $\bm{m}_{\mathcal{P}_F}$ as $N \rightarrow \infty$. 
\end{lemma}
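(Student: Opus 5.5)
Because $\prod_{p\in\mathcal{P}_F}\mathbb{T}_p$ is a compact abelian group, weak convergence to the Haar measure $\bm{m}_{\mathcal{P}_F}$ follows from Weyl's criterion: the Fourier transform of $\mathbb{H}_{N,\mathcal{P}_F}^{(h)}$ must converge pointwise to that of $\bm{m}_{\mathcal{P}_F}$. The characters are $x\mapsto \prod_{p\in\mathcal{P}_F} x_p^{k_p}$ with $(k_p)\in\mathbb{Z}^{\mathcal{P}_F}$. So I need to show that for every nonzero integer vector $(k_p)_{p\in\mathcal{P}_F}$,
\[
\frac{1}{N}\sum_{k=1}^N \prod_{p\in\mathcal{P}_F} p^{-i h k_p \gamma_{n_k}}
=\frac{1}{N}\sum_{k=1}^N e^{-i h \lambda \gamma_{n_k}}\longrightarrow 0,
\qquad \lambda=\sum_{p\in\mathcal{P}_F}k_p\log p .
\]
The zero vector gives the trivial character, where both sides equal $1$.

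First I reduce to the model sequence $\gamma_{n_k}=k+o(1)$ from Corollary~\ref{cor:LL}. Write $\gamma_{n_k}=k+\delta_k$ with $\delta_k\to0$. Then
\[
\bigl|e^{-ih\lambda\gamma_{n_k}}-e^{-ih\lambda k}\bigr|\le h|\lambda||\delta_k|\to0,
\]
and Cesàro averaging shows the two averages differ by $o(1)$. It therefore suffices to show $\frac1N\sum_{k\le N}e^{-ih\lambda k}\to0$. By the geometric sum, this holds as soon as $e^{-ih\lambda}\neq1$, that is, as soon as $h\lambda\notin 2\pi\mathbb{Z}$.

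This last condition is the heart of the matter. By unique factorization the numbers $\log p$ are $\mathbb{Q}$-linearly independent, so $\lambda\neq0$ for any nonzero $(k_p)$. Also $e^{\lambda}=\prod p^{k_p}$ is a positive rational number different from $1$.

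Suppose $h\lambda=2\pi j$ for some integer $j$. If $j=0$ then $\lambda=0$, a contradiction. If $j\neq0$, then $\pi=h\lambda/(2j)$ with $h/(2j)\in\mathbb{Q}$. Hence $e^{\pi}=\bigl(e^{\lambda}\bigr)^{h/(2j)}$ is a rational power of a rational number and so algebraic. This contradicts the Gelfond–Schneider theorem, since $e^{\pi}=(-1)^{-i}$ is transcendental. (Alternatively, Lindemann–Weierstrass gives that $\log q$ is transcendental for rational $q>0$, $q\neq 1$; but the direct route is $e^{\pi}$.) So $h\lambda\notin2\pi\mathbb{Z}$.

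This is the only nontrivial input, and it is exactly why $h$ is required to be rational. The rest is a routine Weyl-criterion argument, and weak convergence follows.
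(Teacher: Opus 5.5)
Your proof is correct and is essentially the paper's argument: Fourier coefficients, replacing $\gamma_{n_k}$ by $k$ at an $o(1)$ Ces\`aro cost, the geometric sum, and Gelfond--Schneider. You usefully spell out how Gelfond--Schneider enters, namely that $h\lambda\in 2\pi\mathbb{Z}\setminus\{0\}$ would make $e^{\pi}$ algebraic. Drop the parenthetical Lindemann--Weierstrass alternative, because transcendence of $\lambda$ does not by itself exclude $\lambda\in 2\pi\mathbb{Q}$ ($\pi$ is transcendental too), so only your $e^{\pi}$ route actually closes the argument.
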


\begin{proof}
Let $\phi_N: \mathbb{Z}^{\mathcal{P}_F} \rightarrow \mathbb{C}$ be the Fourier transform of $\mathbb{H}_{N,\mathcal{P}_F}^{(h)}$. 
Then we have
\begin{align*}
\phi_N \left( \left( n_p \right)_{p \in \mathcal{P}_F} \right) 
&= \int_{\mathbb{T}^\mathcal{P}_F} \prod_{p \in \mathcal{P}_F} x_p^{n_p} \, d \mathbb{H}_{N,\mathcal{P}_F}^{(h)} \left( \left( x_p \right)_{p \in \mathcal{P}_F} \right) \\
&= \frac{1}{N} \sum_{1 \leq k \leq N} \prod_{p \in \mathcal{P}_F} p^{- i h \gamma_{n_k} n_p} \\
&= \frac{1}{N} \sum_{1 \leq k \leq N} \exp \left( - i h \gamma_{n_k} \sum_{p \in \mathcal{P}_F} n_p \log p \right).
\end{align*}
By the asymptotic $ \gamma_{n_k} =  k + o(1)$ as $k \rightarrow \infty$, 
we have
\[
\exp \left( - i h \gamma_{n_k} \sum_{p \in \mathcal{P}_F} n_p \log p \right)
= \exp \left( - i h k \sum_{p \in \mathcal{P}_F}  n_p \log p \right) + E(k), 
\]
for $\left( n_p \right)_{p \in \mathcal{P}_F} \neq \bm{0}$, 
where the limit $ E(k) \rightarrow 0 $ holds as $k \rightarrow \infty$. 
Hence for $\left( n_p \right)_{p \in \mathcal{P}_F} \neq \bm{0}$, 
one has
\begin{align*}
&\phi_N \left( \left( n_p \right)_{p \in \mathcal{P}_F} \right) \\
=&\, \frac{1}{N} \sum_{1 \leq k \leq N} \exp \left( - i h k \sum_{p \in \mathcal{P}_F}  n_p \log p \right) 
+ \frac{ \sum_{k = 1}^N E(k)}{N} \\
=&\, \frac{\exp \left( - i h \sum_{p \in \mathcal{P}_F}  n_p \log p \right)}{N} \cdot \frac{1 -  \exp \left( - i h N \sum_{p \in \mathcal{P}_F}  n_p \log p \right) }{1 -  \exp \left( - i h \sum_{p \in \mathcal{P}_F}  n_p \log p \right) } +  \frac{ \sum_{k = 1}^N E(k)}{N} \\
\rightarrow&\, 0
\end{align*}
as $N \rightarrow \infty$, 
since $ h \sum_{p \in \mathcal{P}_F}  n_p \log p \not\in 2 \pi \mathbb{Z}$ by the Gelfond--Schneider theorem. 
For $\left( n_p \right)_{p \in \mathcal{P}_F} = \bm{0}$, 
we have $\phi_N \left( \left( n_p \right)_{p \in \mathcal{P}_F} \right) = 1$. 
This completes the proof. 
\end{proof}

For $X \geq 1$, $s \in \mathbb{C}$, and $\omega \in \Omega$, put
\[
\zeta_X(s) = \sum_{n \leq X} \frac{1}{n^s},
\qquad
\zeta_X(1, \omega^2) = \sum_{n \leq X} \frac{\omega(n)^2}{n}.
\]
We denote the derivative of $\zeta_X(s)$ with respect to $s$ by $\zeta_X'(s)$.

\begin{lemma}\label{lem:MTR}
We have
\[
\lim_{X \rightarrow \infty} \int_{\Omega} \left| \zeta \left( 1, \omega^2 \right) - \zeta_X \left( 1, \omega^2 \right) \right|^2 d \bm{m} (\omega) = 0.
\]
\end{lemma}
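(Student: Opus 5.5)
Our plan is to use orthogonality of the random multiplicative function $\omega(n)^2$ in $L^2(\Omega,\bm{m})$. Since each $\omega(p)$ is Haar distributed on $\mathbb{T}_p$ and the coordinates are independent, $\omega(p)^2$ is again Haar distributed and independent across primes. Consequently, for positive integers $n, n'$,
\[
\int_\Omega \omega(n)^2\,\overline{\omega(n')^2}\, d\bm{m}(\omega) = \prod_{p} \int_{\mathbb{T}_p} x_p^{2(\nu(n:p)-\nu(n':p))}\, d\bm{m}_p(x_p) = \begin{cases} 1 & n=n',\\ 0 & n \neq n', \end{cases}
\]
by unique factorization. So $\{\omega(n)^2\}_{n\geq 1}$ is an orthonormal system in $L^2(\Omega)$.

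First, for $X < Y$, orthonormality gives
\[
\int_\Omega \left| \zeta_Y(1,\omega^2) - \zeta_X(1,\omega^2)\right|^2 d\bm{m}(\omega) = \sum_{X < n \leq Y} \frac{1}{n^2}.
\]
Hence $(\zeta_X(1,\omega^2))_X$ is Cauchy in $L^2(\Omega)$ and has an $L^2$-limit $F$ satisfying
\[
\|F - \zeta_X(1,\omega^2)\|_{L^2}^2 = \sum_{n > X} n^{-2} \to 0.
\]

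Second, we identify $F$ with the almost-everywhere limit $\zeta(1,\omega^2)$ supplied by the Menshov--Rademacher theorem. Since $\zeta_X(1,\omega^2)\to F$ in $L^2$ along integers $X$, a subsequence converges to $F$ almost everywhere. The full sequence converges almost everywhere to $\zeta(1,\omega^2)$, so $F=\zeta(1,\omega^2)$ almost everywhere. Because $\zeta_X$ depends only on $\lfloor X\rfloor$, passing from integer to real $X$ is harmless. Alternatively, we can apply Fatou's lemma directly:
\[
\int |\zeta(1,\omega^2)-\zeta_X(1,\omega^2)|^2\, d\bm{m} \leq \liminf_{Y\to\infty}\int |\zeta_Y-\zeta_X|^2\, d\bm{m} = \sum_{n>X}n^{-2},
\]
which tends to $0$ as $X \to \infty$.

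The only real point is the orthogonality computation, in particular checking that squaring does not destroy it: the exponents $2(\nu(n:p)-\nu(n':p))$ vanish for all $p$ only when $n=n'$. Everything else is routine.
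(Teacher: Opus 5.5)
Your proof is correct and is essentially the paper's argument: the paper likewise applies Fatou's lemma to the almost-everywhere limit and bounds $\mathbb{E}^{\bm{m}}\bigl[\bigl|\sum_{X<n\leq L}\omega(n)^2/n\bigr|^2\bigr]$ by $\sum_{n>X} n^{-2}$. Your explicit orthonormality check for $\omega(n)^2$ supplies the step the paper leaves implicit, and your alternative route ($L^2$-Cauchy limit identified with the a.e.\ limit via a subsequence) is also valid.
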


\begin{proof}
By Fatou's lemma, one has
\begin{align*}
\mathbb{E}^{\bm{m}} \left[ \left| \zeta \left( 1, \omega^2 \right) - \zeta_X \left( 1, \omega^2 \right) \right|^2 \right]
&=\mathbb{E}^{\bm{m}} \left[ \left|  \sum_{n > X} \frac{\omega(n)^2}{n}  \right|^2 \right] \\
&\leq \liminf_{L \rightarrow \infty}\, \mathbb{E}^{\bm{m}} \left[ \left|  \sum_{X < n \leq L} \frac{\omega(n)^2}{n}  \right|^2 \right] \\
& = \sum_{n > X} \frac{1}{n^2} 
\rightarrow 0
\end{align*}
as $X \rightarrow \infty$. 
This completes the proof. 
\end{proof}

A standard approach is to apply Gallagher's lemma to connect discrete and continuous mean squares of a continuous function. The lemma is stated as follows:

\begin{lemma}[{Gallagher \cite[Lemma 1.4]{M1971}}]\label{lem:GL}
Let $T_0$ and $T \geq \delta >0$ be real numbers, and $\mathcal{T}$ be a finite set in the interval $[T_0 + \delta/2, T_0 + T - \delta/2]$. 
Define
\[
N_{\delta} (x) 
= \sum_{\substack{ t \in \mathcal{T};\\ \left| t - x \right| < \delta }} 1,
\]
and let $S$ be a complex-valued continuous function on $[T_0, T + T_0]$ having a continuous derivative on $(T_0, T + T_0)$. 
Then
\begin{align*}
\sum_{t \in \mathcal{T}} N_\delta(t)^{-1} \left| S (t) \right|^2
\leq \frac{1}{\delta} \int_{T_0}^{T_0 + T} \left| S(x) \right|^2 dx
+ \left( \int_{T_0}^{T_0 + T} \left| S(x) \right|^2 dx \int_{T_0}^{T_0 + T} \left| S'(x) \right|^2 dx \right)^{1/2}. 
\end{align*}

\end{lemma}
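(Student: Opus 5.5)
The statement is Gallagher's lemma, a classical inequality quoted from Montgomery's notes. I would prove it in two steps. First, a local pointwise bound for $|S(t)|^2$ by an average over a window of length $\delta$. Second, a summation over $t \in \mathcal{T}$ using the weights $N_\delta(t)^{-1}$ to control the overlap of the windows.

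\textbf{Step 1 (pointwise bound).} Fix $t \in \mathcal{T}$ and put $I_t = [t - \delta/2, t + \delta/2]$. Since $\mathcal{T} \subset [T_0 + \delta/2, T_0 + T - \delta/2]$, we have $I_t \subset [T_0, T_0 + T]$. For $x \in I_t$ write
\[
S(t)^2 = S(x)^2 + \int_x^t 2 S(u) S'(u)\, du,
\]
which is valid because $S$ is continuous on the closed interval and $C^1$ in the interior. Taking absolute values and averaging over $x \in I_t$ gives
\[
|S(t)|^2 \leq \frac{1}{\delta} \int_{I_t} |S(x)|^2 dx + \int_{I_t} |S(u) S'(u)|\, du .
\]
(A sharper version with $\int_{I_t}|SS'|$ weighted by a triangle kernel is available but unnecessary here.)

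\textbf{Step 2 (summing with weights).} Multiply by $N_\delta(t)^{-1}$ and sum over $t$. It suffices to show that every $x \in [T_0, T_0+T]$ satisfies
\[
\sum_{t \in \mathcal{T}:\, x \in I_t} N_\delta(t)^{-1} \leq 1 .
\]
Let $t_1 < \cdots < t_k$ be the points of $\mathcal{T}$ with $|t_j - x| \leq \delta/2$. Any two of them differ by at most $\delta$. This is the one delicate point, since $N_\delta$ is defined with a strict inequality $|t - x| < \delta$. I would handle it by using the half-open window $I_t = [t - \delta/2, t + \delta/2)$. Then any two $t_i, t_j$ with $x \in I_{t_i} \cap I_{t_j}$ satisfy $|t_i - t_j| < \delta$, so each $N_\delta(t_j) \geq k$. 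The sum is then at most $k \cdot k^{-1} = 1$. Interchanging sum and integral yields
\[
\sum_{t} N_\delta(t)^{-1} |S(t)|^2 \leq \frac{1}{\delta} \int_{T_0}^{T_0+T} |S|^2 + \int_{T_0}^{T_0+T} |S S'| ,
\]
and Cauchy--Schwarz applied to the last integral gives exactly the stated bound.

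The only real obstacle is this overlap-counting. The weights $N_\delta(t)^{-1}$ are designed precisely so that the windows $I_t$ contribute total multiplicity at most one at each point, and choosing half-open windows of length exactly $\delta$ makes the strict inequality in $N_\delta$ match. Everything else is the fundamental theorem of calculus together with Cauchy--Schwarz.
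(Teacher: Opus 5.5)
Your argument is correct and is the standard Gallagher--Montgomery proof of this lemma; the paper gives no proof of its own and only cites Montgomery's notes. Your half-open windows handle the strict inequality in $N_\delta$ cleanly. One correction to your parenthetical in Step 1: the constant $1$ in front of $\int_{I_t}|SS'|$ comes exactly from the triangle-kernel (Fubini) step, since after averaging each $u\in I_t$ gets weight $\frac{2}{\delta}\bigl(\frac{\delta}{2}-|u-t|\bigr)\le 1$, which cancels the $2$ in $(S^2)'=2SS'$; bounding $\bigl|\int_x^t 2SS'\bigr|$ by $2\int_{I_t}|SS'|$ before averaging would lose a factor $2$.
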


\begin{lemma}\label{lem:MV}
We have
\[
\lim_{X \rightarrow \infty} \limsup_{N \rightarrow \infty} \frac{1}{N} \sum_{1 \leq k \leq N} \left| \zeta \left( 1 + 2 i \gamma_{n_k} \right) - \zeta_X \left( 1 + 2 i \gamma_{n_k} \right) \right| = 0. 
\]
\end{lemma}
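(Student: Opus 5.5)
By Cauchy--Schwarz it suffices to show
\[
\lim_{X \to \infty} \limsup_{N \to \infty} \frac{1}{N} \sum_{k \le N} \left| \zeta(1+2i\gamma_{n_k}) - \zeta_X(1+2i\gamma_{n_k}) \right|^2 = 0 .
\]
We take $S(t) = \zeta(1+2it) - \zeta_X(1+2it)$ and apply Gallagher's Lemma~\ref{lem:GL} with $\mathcal{T} = \{\gamma_{n_k} : k_0 \le k \le N\}$. Here $k_0$ is chosen so that $|\gamma_{n_k} - k| < 1/10$ for $k \ge k_0$, which is possible by Corollary~\ref{cor:LL}. The finitely many initial terms contribute $O_X(1)/N \to 0$. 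With $\delta = 1/2$ the points of $\mathcal{T}$ are separated by more than $1/2$, so $N_\delta(t) = 1$ for $t \in \mathcal{T}$. We take $T_0 = k_0 - 1$ and $T = N - k_0 + 2$, so that $\mathcal{T} \subset [T_0+\delta/2, T_0+T-\delta/2]$. The lemma then gives
\[
\sum_{k_0 \le k \le N} |S(\gamma_{n_k})|^2 \le 2\int_{T_0}^{T_0+T} |S(x)|^2\,dx + \Big(\int_{T_0}^{T_0+T} |S|^2 \int_{T_0}^{T_0+T} |S'|^2\Big)^{1/2}.
\]
The function $S$ is smooth on this range because $T_0 \ge 1$ keeps us away from the pole at $s=1$.

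It remains to show that $\frac{1}{T}\int_{T_0}^{T_0+T}|S(x)|^2\,dx \le \eta(X) + o_X(1)$ as $T\to\infty$, with $\eta(X)\to 0$, and that $\frac1T\int_{T_0}^{T_0+T}|S'(x)|^2\,dx = O_X(1)$ or at least $O(\log^{c} X)$. Dividing by $N \asymp T$ then gives
\[
\limsup_{N\to\infty} \frac{1}{N}\sum_{k\le N}|S(\gamma_{n_k})|^2 \ll \eta(X) + \big(\eta(X)\,C(X)\big)^{1/2},
\]
and we need the product $\eta(X)C(X)$ to tend to $0$.

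The mean values on the line $\Re s = 1$ come from an approximate formula for the tail. For $|t| \le x$ we have $\zeta(1+it) = \sum_{n\le x} n^{-1-it} + \frac{x^{-it}}{it} + O(x^{-1})$. Taking $x = T$ and $t = 2u$, we get
\[
S(u) = \sum_{X<n\le T} n^{-1-2iu} + O(1/|u|) + O(1/T).
\]
The Montgomery--Vaughan mean value theorem then bounds the mean square of the Dirichlet polynomial over $[T_0,T_0+T]$ by
\[
\sum_{X<n\le T} (T + O(n))\, n^{-2} \ll \frac{T}{X} + \log T .
\]
Hence $\frac1T\int|S|^2 \ll 1/X + (\log T)/T$, and we may take $\eta(X) \asymp 1/X$.

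For $S'$ we differentiate, which gives $S'(u) = -2i\big(\zeta'(1+2iu) - \zeta_X'(1+2iu)\big)$. The same approach uses the analogous approximate formula for $\zeta'$, namely the tail $\sum_{X<n\le T} (\log n)\, n^{-1-2iu}$ plus small error terms. Montgomery--Vaughan then gives
\[
\frac1T\int|S'|^2 \ll \sum_{n>X} \frac{\log^2 n}{n^2} + o(1) \ll \frac{\log^2 X}{X}.
\]
This bound is in fact small, not merely bounded.

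Combining the two estimates, the Gallagher bound yields
\[
\limsup_{N\to\infty} \frac{1}{N}\sum_{k\le N}|S(\gamma_{n_k})|^2 \ll \frac{1}{X} + \frac{\log X}{X} \to 0 ,
\]
which proves the lemma. The main obstacle is justifying the approximate formulas for $\zeta$ and $\zeta'$ uniformly on $1 \le |u| \le T$, and controlling the error terms $x^{-it}/(it)$ in mean square. These error terms are $O(1/|u|)$, so their mean square is $O(1/T)$ after averaging, which is harmless. Beyond that, the proof is bookkeeping in Gallagher's inequality.
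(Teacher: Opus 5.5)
Your proof is correct and is essentially the paper's argument: Cauchy--Schwarz, then Gallagher's lemma, then the continuous mean squares of $\zeta-\zeta_X$ and $\zeta'-\zeta_X'$ on $\Re s=1$. For Gallagher you use $\mathcal{T}=\{\gamma_{n_k}\}$ with $\delta=1/2$ after discarding finitely many terms, where the paper uses $\{2\gamma_{n_k}\}$ with $\delta=1/4$.

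The only substantive difference is how the mean values are obtained. The paper takes them directly from Carlson's theorem (Titchmarsh, \S9.51), whereas you derive them from the approximate formula and the Montgomery--Vaughan inequality. Your route gives explicit rates, but it needs $x$ to be a constant multiple of $T$ (since $2u$ reaches about $2T$, so $x=T$ is too small for your stated range). It also needs a short Cauchy-integral argument to justify the approximate formula for $\zeta'$.
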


\begin{proof}
We apply Lemma \ref{lem:GL} with $T_0 = 1$, $\delta = 1/4$, $T = 3N - 1$ and $\mathcal{T} = \{ 2 \gamma_{n_1}, \ldots, 2 \gamma_{n_N} \}$. 
Since 
\[
N_{\delta} (x) 
= \sum_{\substack{ t \in \mathcal{T};\\ \left| t - x \right| < \delta }} 1
\ll 1
\]
for $x \in \mathcal{T}$, 
one has, by the Cauchy--Schwarz inequality,
\begin{align*}
& \left( \frac{1}{N} \sum_{1 \leq k \leq N} \left| \zeta \left( 1 + 2 i \gamma_{n_k} \right) - \zeta_X \left( 1 + 2 i \gamma_{n_k} \right) \right| \right)^2\\
\leq&\, \frac{1}{N} \sum_{1 \leq k \leq N} \left| \zeta \left( 1 + 2 i \gamma_{n_k} \right) - \zeta_X \left( 1 + 2 i \gamma_{n_k} \right) \right|^2 \\
\ll&\, \frac{1}{N} \int_{1}^{3N} \left| \zeta \left( 1 + i t \right) - \zeta_X \left( 1 + i t \right) \right|^2 dt \\
& \quad + \frac{1}{N} \left( \int_{1}^{3N} \left| \zeta \left( 1 + i t \right) - \zeta_X \left( 1 + i t \right) \right|^2 dt \int_{1}^{3N} \left| \zeta' \left( 1 + i t \right) - \zeta_X' \left( 1 + i t \right) \right|^2 dt  \right)^{1/2}. 
\end{align*}
Now, by Carlson's theorem \cite[\S9.51]{T1952}, 
one has
\[
\frac{1}{T} \int_{1}^{T} \left| \zeta \left( 1 + i t \right) - \zeta_X \left( 1 + i t \right) \right|^2 dt 
\sim \sum_{n > X} \frac{1}{n^{2}}
\]
and
\[
\frac{1}{T} \int_{1}^{T} \left| \zeta' \left( 1 + i t \right) - \zeta'_X \left( 1 + i t \right) \right|^2 dt 
\sim \sum_{n > X} \frac{ \left( \log n \right)^2 }{n^{2}}
\]
as $T \rightarrow \infty$. 
From the above asymptotic formulas, we have the conclusion. 
\end{proof}

\begin{proof}[Proof of Proposition \ref{prop:LT} (i)]
We use the method of Kowalski as in \cite[Chapter 3.2]{K2021}. 
For a probability space $(M, \mathcal{F}, \mu)$, the expectation of a random variable $X$ is denoted by $\mathbb{E}^{\mu} [X] = \int_{M} X \, d \mu$. 
Let $F$ be a bounded Lipschitz continuous function on $ \mathbb{C} \times \prod_{m = 1}^M \mathbb{T}_{p_m} $. 
It is enough to show that $\left| \EXP{\mu_N}{F} - \EXP{\mu}{F} \right| \rightarrow 0$ by the Portmanteau theorem (see e.g. \cite[Theorem 13.16]{K2020}). 
By the definition of Lipschitz continuity, 
we can take $C(F)>0$ such that
\[
\left| F \left( a, \left( x_{p_m} \right)_{m = 1}^M \right) - F \left( b, \left( y_{p_m} \right)_{m = 1}^M \right) \right| 
\leq C(F) d \left( \left( a, (x_{p_m})_{m = 1}^M \right), \left( b, (y_{p_m})_{m = 1}^M \right) \right)
\]
for $\left( a, (x_{p_m})_{m = 1}^M \right), \left( b, (y_{p_m})_{m = 1}^M \right) \in \left( \mathbb{C} \times \prod_{m = 1}^M \mathbb{T}_{p_m} \right)$.

Let $\mathbb{P}_N$ denote the probability on the set $\left\{ 1, \ldots, N \right\}$ given by 
\[
\mathbb{P}_N \left( A \right) 
= \frac{1}{N} \# \left \{ 1\leq k \leq N \mid k \in A \right\}
\]
for any $A \subset \left\{ 1, \ldots, N \right\}$. 
We can write
\begin{align*}
&\left| \EXP{\mu_N}{F} - \EXP{\mu}{F} \right| \\
= &\left| \EXP{\mathbb{P}_N}{ F\left( \zeta \left( 1 + 2 i \gamma_{n_k} \right), \left( p_m^{ - i \gamma_{n_k}} \right)_{m = 1}^M \right) } - \EXP{\bm{m}}{F\left( \zeta \left( 1, \omega^2 \right), \left( \omega(p_m) \right)_{m = 1}^M \right)}  \right| \\
\leq&\, \left| \EXP{\mathbb{P}_N}{ F \left( \zeta \left( 1 + 2 i \gamma_{n_k} \right), \left( p_m^{ - i \gamma_{n_k}} \right)_{m = 1}^M \right) } - \EXP{\mathbb{P}_N}{ F\left( \zeta_X \left( 1 + 2 i \gamma_{n_k} \right), \left( p_m^{ - i \gamma_{n_k}} \right)_{m = 1}^M \right) } \right| \\
& \quad + \left| \EXP{\mathbb{P}_N}{ F\left( \zeta_X \left( 1 + 2 i \gamma_{n_k} \right), \left( p_m^{ - i \gamma_{n_k}} \right)_{m = 1}^M \right) } - \EXP{\bm{m}}{ F\left( \zeta_X \left( 1, \omega^2 \right), \left( \omega(p_m) \right)_{m = 1}^M \right) } \right| \\
& \quad \quad + \left| \EXP{\bm{m}}{ F\left( \zeta_X \left( 1, \omega^2 \right), \left( \omega(p_m) \right)_{m = 1}^M \right) } - \EXP{\bm{m}}{ F\left( \zeta \left( 1, \omega^2 \right), \left( \omega(p_m) \right)_{m = 1}^M \right) } \right| \\
=:&\, \Sigma_1(N, X) + \Sigma_2(N, X) + \Sigma_3(N, X),
\end{align*}
for any $X > p_M$. 

As for $\Sigma_1(N, X)$, we have
\begin{align*}
\Sigma_1(N, X)
& \leq C(F) \, \EXP{ \mathbb{P}_N }{ d \left( \left( \zeta \left( 1 + 2 i \gamma_{n_k} \right), \left( p_m^{ - i \gamma_{n_k}} \right)_{m = 1}^M \right),\, \left( \zeta_X \left( 1 + 2 i \gamma_{n_k} \right), \left( p_m^{ - i \gamma_{n_k} } \right)_{m = 1}^M \right) \right) } \\
& = \frac{C(F)}{N} \sum_{1 \leq k \leq N} \left| \zeta \left( 1 + 2 i \gamma_{n_k} \right) - \zeta_X \left( 1 + 2 i \gamma_{n_k} \right) \right|.
\end{align*}
Hence Lemma \ref{lem:MV} implies $ \lim_{X \rightarrow \infty} \limsup_{N \rightarrow \infty} \Sigma_1(N, X) = 0$.

Next, we estimate $\Sigma_2(N, X)$. 
We define the function $\zeta_X \left( 1, \cdot \right): \mathbb{T}^{\mathcal{P}_X } \rightarrow \mathbb{C}$ by
\begin{align*}
\zeta_X \left( 1, \bm{x} \right)
= \sum_{n \leq X} \frac{1}{n} \prod_{p | n} {x_p}^{\nu(n:p)}
\end{align*}
for $\bm{x} \in \mathbb{T}^{\mathcal{P}_X }$.
Then we have
\begin{align*}
&\EXP{\mathbb{P}_N}{ F\left( \zeta_X \left( 1 + 2 i \gamma_{n_k} \right), \left( p_m^{ - i \gamma_{n_k} } \right)_{m = 1}^M \right)} \\
=&\, \EXP{\mathbb{P}_N}{ F\left( \zeta_X \left( 1, \left( \left( p^{ - i \gamma_{n_k}} \right)^2 \right)_{p \in \mathcal{P}_X } \right), \left( p_m^{ - i \gamma_{n_k}} \right)_{m = 1}^M \right)} \\
=&\, \EXP{ \mathbb{H}_{N,\mathcal{P}_X}^{(1)} }{ F \left(  \zeta_X \left( 1, \left( \left( x_p \right)^2 \right)_{p \in \mathcal{P}_X } \right), \left( x_{p_m} \right)_{m = 1}^M  \right) } \\
\rightarrow &\, \EXP{ \bm{m}_{\mathcal{P}_X} }{ F \left(  \zeta_X \left( 1, \left( \left( x_p \right)^2 \right)_{p \in \mathcal{P}_X } \right), \left( x_{p_m} \right)_{m = 1}^M  \right) } \\
=&\, \EXP{\bm{m}}{ F\left( \zeta_X \left( 1, \omega^2 \right), \left( \omega(p_m) \right)_{m = 1}^M \right) }
\end{align*}
as $N \rightarrow \infty$ by the property of weak convergence (see, e.g., \cite[Section 2, The Mapping Theorem]{B1999}).

Finally, we estimate $\Sigma_3(N, X)$. 
One has
\begin{align*}
\Sigma_3(N, X)
&\leq C(F) \, \EXP{ \bm{m} }{ d \left( \left( \zeta_X   \left( 1, \omega^2 \right), \left( \omega(p_m) \right)_{m = 1}^M \right),\, \left( \zeta \left( 1, \omega^2 \right), \left( \omega(p_m) \right)_{m = 1}^M \right)  \right) } \\
&= C(F) \, \int_{\Omega} \left| \zeta_X \left( 1, \omega^2 \right) - \zeta \left( 1, \omega^2 \right) \right| d \bm{m} (\omega) \\
&\leq C(F) \, \left( \int_{\Omega} \left| \zeta_X \left( 1, \omega^2 \right) - \zeta \left( 1, \omega^2 \right) \right|^2 d \bm{m} (\omega) \right)^{1/2}
\end{align*}
by the Cauchy--Schwarz inequality. 
Lemma \ref{lem:MTR} implies $\Sigma_3(N, X) \rightarrow 0$ as $X \rightarrow \infty$.
This completes the proof. 
\end{proof}

\subsection{Proof of Proposition \ref{prop:LT} (ii)}

\begin{lemma}
The random variable 
\[
\log \zeta(1, \omega^2) 
= - \sum_{p} \log \left( 1 - \frac{\omega(p)^2}{p} \right)
\]
converges almost everywhere. 
Moreover, the equality
\[
\zeta(1, \omega^2)
= \exp \left( \log \zeta(1, \omega^2) \right)
\]
holds almost everywhere.
\end{lemma}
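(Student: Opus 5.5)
Write $\log \zeta(1,\omega^2)$ as a main term over primes plus an absolutely convergent remainder. Expanding $-\log(1 - \omega(p)^2/p) = \sum_{k\geq1} \omega(p)^{2k}/(kp^k)$, we have
\[
-\sum_{p} \log\left(1 - \frac{\omega(p)^2}{p}\right) = \sum_{p} \frac{\omega(p)^2}{p} + \sum_{p} R_p(\omega), \qquad |R_p(\omega)| \ll \frac{1}{p^2},
\]
uniformly in $\omega$. The second series converges absolutely for every $\omega$. For the first, the random variables $X_p = \omega(p)^2/p$ are independent with $\mathbb{E}^{\bm{m}}[X_p] = 0$, since $\omega(p)^2$ is again Haar distributed on $\mathbb{T}$, and $\sum_p \mathbb{E}^{\bm{m}}|X_p|^2 = \sum_p p^{-2} < \infty$. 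By Kolmogorov's two-series theorem (or the Menshov--Rademacher theorem already quoted, since the $X_p$ are orthogonal) the series $\sum_p \omega(p)^2/p$ converges almost everywhere. This proves the first claim.

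For the equality, I would compare partial products with the partial sums $\zeta_X(1,\omega^2)$. Fix the full-measure set $\Omega_0$ on which both $\sum_n \omega(n)^2/n$ and $\log\zeta(1,\omega^2)$ converge. Set
\[
L_X(\omega) = -\sum_{p \leq X}\log\left(1 - \frac{\omega(p)^2}{p}\right), \qquad \exp(L_X(\omega)) = \prod_{p\leq X}\left(1-\frac{\omega(p)^2}{p}\right)^{-1} = \sum_{n:\, P^+(n)\leq X} \frac{\omega(n)^2}{n},
\]
where the last sum runs over $X$-smooth $n$ and converges absolutely because the finite Euler product does. On $\Omega_0$ we have $\exp(L_X(\omega)) \to \exp(\log\zeta(1,\omega^2))$ pointwise. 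It therefore suffices to show that
\[
D_X(\omega) := \sum_{n:\, P^+(n)\leq X}\frac{\omega(n)^2}{n} - \zeta_X(1,\omega^2)
\]
tends to $0$ almost everywhere along some sequence $X_j \to \infty$, because $\zeta_{X_j}(1,\omega^2) \to \zeta(1,\omega^2)$ on $\Omega_0$.

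The main obstacle is this last step, since we only have pointwise control and the smooth-number sum is not a tail of the Dirichlet series. I would handle it in $L^2$. The sum $D_X$ runs over integers $n > X$ with $P^+(n) \leq X$, and the random variables $\omega(n)^2$ are orthonormal (distinct $n$ give distinct characters $\omega \mapsto \omega(n)^2$). By Fatou's lemma, as in Lemma~\ref{lem:MTR},
\[
\mathbb{E}^{\bm{m}}|D_X|^2 \leq \sum_{n > X} \frac{1}{n^2} \ll \frac{1}{X}.
\]
Choosing $X_j = 2^j$ gives $\sum_j \mathbb{E}^{\bm{m}}|D_{X_j}|^2 < \infty$, so by Borel--Cantelli (or monotone convergence applied to $\sum_j |D_{X_j}|^2$) we get $D_{X_j} \to 0$ almost everywhere. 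Intersecting this full-measure set with $\Omega_0$ and letting $j\to\infty$ in
\[
\exp(L_{X_j}) = \zeta_{X_j}(1,\omega^2) + D_{X_j}
\]
yields $\zeta(1,\omega^2) = \exp(\log\zeta(1,\omega^2))$ almost everywhere.
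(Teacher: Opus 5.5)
Your proof is correct and complete. The paper gives no details here, only a pointer to \cite[Lemma 4.2]{S2007}. Your first paragraph is the standard argument behind that kind of reference, specialized to $s=1$ and to $\omega(p)^2$: you split off the absolutely convergent terms with $k\ge 2$, then apply Kolmogorov's theorem (or Menshov--Rademacher) to the independent mean-zero terms $\omega(p)^2/p$, using $\sum_p p^{-2}<\infty$.

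Where your route genuinely differs is in identifying $\exp(\log\zeta(1,\omega^2))$ with the Dirichlet series $\sum_n \omega(n)^2/n$. In the usual function-valued treatment of random Euler products, this identification is obtained by viewing both sides as random holomorphic functions of $s$. That requires three ingredients: almost sure convergence of $\sum_n\omega(n)^2n^{-s}$ and of $\sum_p\omega(p)^2p^{-s}$ at some real $s_0<1$; the fact that a Dirichlet series converging at $s_0$ converges locally uniformly on $\Re s>s_0$; and the identity theorem, since the two sides agree trivially on $\Re s>1$.

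You instead stay at the single point $s=1$. The finite Euler product equals the sum over $X$-smooth $n$, and its difference $D_X$ from $\zeta_X(1,\omega^2)$ is supported on $n>X$. Orthonormality of $\omega\mapsto\omega(n)^2$ then gives $\mathbb{E}^{\bm{m}}|D_X|^2\le\sum_{n>X}n^{-2}$, exactly as in Lemma~\ref{lem:MTR}, and Borel--Cantelli along $X_j=2^j$ finishes.

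Your argument uses only tools already present in the paper and is tailored to a statement made at one point. The analytic-continuation route needs a little more setup (convergence off the point $s=1$ and half-plane theory of Dirichlet series). In exchange it yields a random holomorphic function with locally uniform convergence, which the present lemma does not need.
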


\begin{proof}
This can be proved by an argument similar to \cite[Lemma 4.2]{S2007}. 
\end{proof}

\begin{lemma}\label{lem:DL}
The set of all convergent elements
\[
\left( \log \zeta \left(1, c^2 \right), \left( c(p_m) \right)_{m=1}^M \right), \quad c = \left( c(p) \right)_p \in \prod_p \mathbb{T}_p
\]
is dense in $\mathbb{C} \times \prod_{m = 1}^M \mathbb{T}_{p_m}$. 
\end{lemma}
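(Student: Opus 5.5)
Given a target $(b,(\theta_m)_{m=1}^M)\in\mathbb{C}\times\prod_{m=1}^M\mathbb{T}_{p_m}$ and $\epsilon>0$, I will produce a $c\in\prod_p\mathbb{T}_p$ in three blocks. First fix $c(p_m)=\theta_m$ for $m\le M$. Then choose $c(p)$ for a finite middle range of primes $p_M<p\le y$ to steer the partial sum to $b$. Finally choose $c(p)$ for $p>y$ so that the tail converges and is small. Throughout write
\[
g_p(c)=-\log\Bigl(1-\frac{c(p)^2}{p}\Bigr)=\frac{c(p)^2}{p}+O\Bigl(\frac1{p^2}\Bigr).
\]
The contribution of the first $M$ primes is a fixed complex number $A$, and $\sum_p |g_p - c(p)^2/p|<\infty$ uniformly in $c$. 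So only the linear terms $c(p)^2/p$ need to be controlled, and since $c(p)\mapsto c(p)^2$ maps $\mathbb{T}$ onto $\mathbb{T}$, we may choose $u_p:=c(p)^2\in\mathbb{T}$ freely for $p>p_M$.

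For the tail, I split primes $p>y$ into consecutive pairs $q<q'$ and set $u_{q'}=-u_q$. The pair then contributes $u_q(1/q-1/q')$ up to $O(1/q^2)$, and $\sum_q (1/q-1/q')$ converges. Taking e.g.\ all $u=\pm1$ in this alternating pattern, the full series $\sum_{p>y} g_p(c)$ converges, and its modulus is at most $\sum_{p>y}\bigl(|1/q-1/q'|+O(1/p^2)\bigr)$, which tends to $0$ as $y\to\infty$. Convergence of the series is what makes $c$ a "convergent element". This handles the tail with a bound $<\epsilon/2$ once $y$ is large.

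The middle block is the key step. I need $\sum_{p_M<p\le y} g_p(c)$ to equal $b-A$ up to $\epsilon/2$. Because $\sum_p 1/p=\infty$, I take $y$ large enough that $\sum_{p_M<p\le y}1/p>|b-A|+C$, where $C$ bounds $\sum_p|g_p-u_p/p|$. I then invoke the standard lemma (as in Bagchi/Steuding, or directly) that if $\sum_{p\in I}1/p\ge R+\max_{p\in I} 1/p$, the set $\{\sum_{p\in I}u_p/p : u_p\in\mathbb{T}\}$ contains the closed disc of radius $R$. This follows by greedy rotation, or by induction: the set of sums of circles of radii $r_1\ge r_2\ge\dots$ is the annulus $\{\max(0,2r_1-\sum r_j)\le|z|\le\sum r_j\}$.

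The nonlinear error $O(1/p^2)$ is the main obstacle, since it depends on the choice of $u_p$. I handle it by exactness: the map $u_p\mapsto -\log(1-u_p/p)$ sends $\mathbb{T}$ onto a closed curve $\Gamma_p$ that is a small perturbation of the circle of radius $1/p$, and $\Gamma_p$ winds once around $0$ with $|z|$ between $1/p-O(p^{-2})$ and $1/p+O(p^{-2})$. The Minkowski sum of the annulus regions bounded by such curves still contains a disc of radius $\sum(1/p-O(p^{-2}))-2\max 1/p$. One can see this by the same induction, using that each $\Gamma_p$ meets every circle centered at $0$ with radius in its range. Alternatively, I would absorb the error by a final adjustment using one more large prime, so that the remaining discrepancy $<\epsilon/2$ is fixed by continuity.

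Combining the three blocks gives $|\log\zeta(1,c^2)-b|<\epsilon$ with $c(p_m)$ exactly as prescribed, which proves the asserted density.
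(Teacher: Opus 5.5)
Your proof is correct, but it is more hands-on than the paper's. The paper sets $c(p_m)=z(p_m)$ for $m\le M$ and then invokes the one-dimensional version of Voronin's rearrangement argument as a black box. It uses only that squaring maps $\mathbb{T}$ onto $\mathbb{T}$, that $\sum_p 1/p=\infty$ and that $1/p\to 0$, and obtains a tail $\sum_{m>M}\log(1-c(p_m)^2/p_m)^{-1}$ that both converges and lands within $\epsilon$ of the required value in a single step.

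You instead separate the two requirements. A finite block steers the sum via the sum-of-circles annulus lemma, and the alternating tail $u_{q'}=-u_q$ gives convergence with arbitrarily small total. This makes the ``convergent element'' condition transparent and the proof self-contained.

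The cost is the nonlinear term, and there your sketch is loose.
\begin{itemize}
\item What must cover a disc is the Minkowski sum of the curves $\Gamma_p$ themselves, not of the regions they bound.
\item The property that drives the induction step is that $\Gamma_p$ is connected and meets every ray from $0$ (winding number one). Then, by connectedness, $|w-\gamma|$ takes every value between its values at the points of $\Gamma_p$ on the rays through $w$ and $-w$. Meeting every centred circle does not suffice for this.
\item The induction also needs a separate two-curve base case, because a single $\Gamma_p$ is not an annulus.
\end{itemize}

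All of this can be avoided. Fix $c(p)$ arbitrarily for $p_M<p\le z$, with $z$ so large that $\sum_{p>z}1/(p(p-1))<\epsilon/4$. Then steer only with the primes in $(z,y]$ using the linear lemma, so the quadratic error is negligible.

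Your fallback of correcting with ``one more large prime'' does not work as stated. The uncorrected discrepancy can be as large as $C$, while one prime moves the sum by only about $1/p$.

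Finally, your disc lemma as stated also needs $R\ge\max_{p\in I}1/p$, since a single circle contains no disc. This is harmless, because you can enlarge $y$.
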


\begin{proof}
Let $a \in \mathbb{C}$, $z(p_1), \ldots, z(p_M) \in \mathbb{T}$, and let $\epsilon > 0$.
Since the squaring map on $\mathbb{T}$ is surjective,
the one-dimensional version of the rearrangement argument as in \cite{V1972} shows that there exists a convergent series
\[
\sum_{m = M + 1}^\infty \log \left( 1 - \frac{c(p_m)^2}{p_m} \right)^{-1}, \quad
c(p_m) \in \mathbb{T}
\]
such that
\[
\left| \sum_{m = M + 1}^\infty \log \left( 1 - \frac{ c(p_m)^2 }{p_m} \right)^{-1} + \sum_{m = 1}^M \log \left( 1 - \frac{ z(p_m)^2 }{p_m} \right)^{-1} - a \right| < \epsilon.
\]
For $1 \leq m \leq M$, we put $c(p_m) = z(p_m)$, and let $c = \left( c(p_m) \right)_{m = 1}^\infty$. 
Then one has
\begin{align*}
&d \left( \left( \log \zeta \left(1, c^2 \right), \left( c(p_m) \right)_{m=1}^M \right),\, \left( a, \left( z (p_m) \right)_{m = 1}^M \right) \right) \\
=&\, \left| \sum_{m = M + 1}^\infty \log \left( 1 - \frac{ c(p_m)^2 }{p_m} \right)^{-1} + \sum_{m = 1}^M \log \left( 1 - \frac{ z(p_m)^2 }{p_m} \right)^{-1} - a \right| 
< \epsilon. 
\end{align*}
This completes the proof. 
\end{proof}

\begin{lemma}[see {\cite[Proposition 5.2]{E2026}}] \label{lem:E2026}
Let $\mathcal{S}$ be a separable metric space and let $\mathcal{B}_j$ be a separable Fr\'echet space for $j = 1, \ldots, r$.
For any $j = 1, \ldots, r$, let $(F_{n,j})_{n = 1}^\infty$ be $\mathcal{B}_j$-valued continuous functions on $\mathcal{S}$. 
Suppose that $\mathcal{M}$ is a finite subset of the set of positive integers and that $X = (X_n)_{n=1}^\infty$ is a sequence of independent $\mathcal{S}$-valued random variables on a probability space $(\Omega, \mathcal{A}, \mathbb{P})$ such that the series $\sum_{n = 1}^\infty F_{n, j} (X_n)$ converges almost surely for $j = 1, \ldots, r$. 
We define the $ \prod_{j = 1}^r \mathcal{B}_j \times \prod_{n \in \mathcal{M}} \mathcal{S}_n $-valued random variable $\Phi$ by
$\Phi (X) = \left( \left( \sum_{n = 1}^\infty F_{n,j}(X_n) \right)_{j = 1}^r,\, (X_n)_{n \in \mathcal{M}} \right)$, where $\mathcal{S}_n = \mathcal{S}$. 
Then the support of the distribution of $\Phi$ coincides with the closure of the set
\[
\left\{ \left( \left( \sum_{n = 1}^\infty F_{n, j}(x_n) \right)_{j = 1}^r ,\, (x_n)_{n \in \mathcal{M}} \right) ~;~ x_n \in \supp( X_n ) \quad {\rm and} \quad \textit{$\displaystyle \sum_{n = 1}^\infty F_{n, j}(x_n)$ {\rm converges}} \right\},
\]
where $\supp(Y)$ stands for the support of the distribution of a random variable $Y$. 
\end{lemma}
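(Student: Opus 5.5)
The statement is a cited result, but here is how I would prove it: first identify the support of the distribution of $\Phi$ as a subset of the closure, then show that every point of that closure is charged.

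\emph{Step 1: the support lies in the closure.} Let $D$ be the set of convergent values described in the statement, so the support is to be compared with $\overline{D}$. Put $A=\{\omega : x_n=X_n(\omega)\in\supp(X_n)\text{ for all } n,\ \text{all series converge}\}$. Each $\supp(X_n)$ carries full mass, since $\mathcal{S}$ is separable, and there are countably many $n$. The series converge almost surely by hypothesis. Hence $\mathbb{P}(A)=1$. On $A$ we have $\Phi(X)\in D$, so the distribution of $\Phi$ is concentrated on the closed set $\overline{D}$. Therefore $\supp\Phi\subset\overline{D}$.

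\emph{Step 2: every point of $D$ is charged.} Since the support is closed, it suffices to show that each point of $D$ lies in $\supp\Phi$. Fix a convergent choice $(x_n)$ with $x_n\in\supp(X_n)$, and fix a basic neighbourhood $U$ of its image $y$. Such a $U$ is specified by finitely many seminorm balls of radius $\epsilon$ in each $\mathcal{B}_j$, together with balls in the coordinates $n\in\mathcal{M}$.

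First choose $N\ge\max\mathcal{M}$ so large that the deterministic tails $\sum_{n>N}F_{n,j}(x_n)$ are small in the relevant seminorms. Almost sure convergence of the random series means that the random tails $R_{N,j}=\sum_{n>N}F_{n,j}(X_n)$ tend to $0$ almost surely as $N\to\infty$. Hence, enlarging $N$ if needed, $\mathbb{P}(R_{N,j}\text{ small for all } j)>1/2$.

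Next use continuity of the finitely many maps $F_{n,j}$, $n\le N$. There are neighbourhoods $V_n$ of $x_n$ such that $z_n\in V_n$ for all $n\le N$ forces $\sum_{n\le N}F_{n,j}(z_n)$ to be close to $\sum_{n\le N}F_{n,j}(x_n)$. For $n\in\mathcal{M}$ we shrink $V_n$ further so that $V_n$ lies inside the required ball around $x_n$.

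Since $x_n\in\supp(X_n)$, each $\mathbb{P}(X_n\in V_n)$ is positive. The random tail $R_{N,j}$ is measurable with respect to $(X_n)_{n>N}$, so it is independent of $(X_n)_{n\le N}$. Independence therefore gives
\[
\mathbb{P}(\Phi\in U)\ \ge\ \prod_{n\le N}\mathbb{P}(X_n\in V_n)\cdot\tfrac12>0 .
\]
This shows $D\subset\supp\Phi$, and with Step 1 the two sets coincide after closure.

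\emph{Main obstacle.} The delicate point is the tail control in Step 2. The Fréchet topology is given by countably many seminorms, so one must work with a basic neighbourhood involving only finitely many of them. One must also make sure that a single $N$ works simultaneously for the deterministic tails and, with positive probability, for the random tails. Working with finitely many seminorms and finitely many $j$ handles this. Measurability of the limiting series in the separable Fréchet space is standard.
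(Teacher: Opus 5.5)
Your proof is correct. The inclusion $\supp\Phi\subset\overline{D}$ follows from the full-measure event on which $\Phi\in D$. The reverse inclusion follows by splitting into a finite head, controlled by continuity of the $F_{n,j}$ for $n\le N$ and positivity of $\mathbb{P}(X_n\in V_n)$, and an independent tail that is small with probability tending to $1$; choosing $N\ge\max\mathcal{M}$ handles the extra coordinates. The paper gives no proof of its own here, only the citation to \cite[Proposition 5.2]{E2026}, so I cannot compare routes; yours is the standard argument for supports of sums of independent random series.
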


\begin{proof}[Proof of Proposition \ref{prop:LT} (ii)]
The assertion follows from Lemma \ref{lem:DL}, Lemma \ref{lem:E2026} and \cite[Lemma B.2.1]{K2021}.

\end{proof}

\subsection{Proof of Theorem \ref{thm:main2}}

\begin{proof}[Proof of Theorem \ref{thm:main2}]

Let $a \in \mathbb{C}$, $\theta_1, \ldots, \theta_M \in \mathbb{R}$ and $\epsilon>0$. 
Put
\[
U = \left\{ u \in \mathbb{C} \mid \left| u - a \right| < \epsilon \right\},
\]
\[
V_{m} 
 =  \left\{ v \in \mathbb{T} \mid d_{\mathbb{T}} \left( v,\, \exp \left( - 2 \pi i \theta_{m} \right)  \right) <  \epsilon \right\} 
 =  \left\{ \exp \left( - 2 \pi i \theta \right) \in \mathbb{T} \mid \left\| \theta - \theta_m \right\| <  \epsilon \right\} 
\]
for $1 \leq m \leq M$. 
Note that $V_{m}$ is an open neighborhood of $\exp \left( - 2 \pi i \theta_{m} \right)$ and that the condition $p_m^{- i \gamma} \in V_{m}$ is equivalent to $\left\| \gamma \left( \log p_m / 2 \pi \right) - \theta_m \right\| < \epsilon$. 
Put $\bm{A} = U \times \left( V_1 \times \cdots \times V_M \right)$. 
Then, by the Portmanteau theorem (see, e.g., \cite[Theorem 13.16]{K2020}) and Proposition \ref{prop:LT}, 
we have
\begin{align*}
&\liminf_{N \rightarrow \infty} \frac{1}{N} \# \Bigg{\{} 1 \leq k \leq N \mid \left| \zeta \left( 1 + 2 i \gamma_{n_k} \right) - a \right| < \epsilon \AND
 \max_{1 \leq m \leq M}  \left\| \frac{\log p_m}{2 \pi}  \gamma_{n_k} - \theta_m \right\| < \epsilon \Bigg{\}} 
 \\ 
= &\, \liminf_{N \rightarrow \infty} \mu_N \left( \bm{A} \right) 
\geq \mu \left( \bm{A} \right) > 0.
\end{align*}
This completes the proof. 
\end{proof}

\section{Proof of Theorem \ref{thm:main1}}

We first recall the Hoffman relation \cite{H1992}.
It follows from the harmonic product relations for multiple zeta values. 
A family $\Pi = \{ P_1, P_2, \ldots, P_\ell \}$ is called a partition of $\left\{ 1, 2, \ldots, r \right\}$ if the sets  $P_1, P_2, \ldots, P_\ell$ are mutually disjoint and $\{ 1, 2, \ldots, r \} = \bigcup_{j=1}^\ell P_j$. 
For any partition $\Pi = \{ P_1, P_2, \ldots, P_\ell \}$ of  $\left\{ 1, 2, \ldots, r \right\}$, we put
\[
c \left( \Pi \right) = (-1)^{r - \ell} \prod_{j = 1}^\ell \left( \left| P_j \right| - 1 \right)! \AND 
\zeta \left( s_1, \ldots, s_r;\, \Pi \right) = \prod_{j = 1}^\ell \zeta \left( \sum_{k \in P_j} s_k \right). 
\]
Using the argument of Hoffman in \cite[Theorem 2.2]{H1992},
one has
\[
\sum_{ \sigma \in S_r} \zeta_r \left( s_{\sigma(1)}, \ldots, s_{\sigma(r)} \right)
= \sum_{\text{partition $\Pi$ of $\{1, 2, \ldots, r\}$}} c \left( \Pi \right) \zeta \left( s_1, \ldots, s_r;\, \Pi \right)
\]
for $s_1, \ldots, s_r \in \mathbb{C}$ with $ \min_{j = 1, \ldots, r} \Re(s_j) > 1$, 
where $S_r$ is the symmetric group of degree $r$. 
Letting $s = s_1 = s_2 = \cdots = s_r$, we have
\begin{equation}\label{eqn:HF}
\zeta_r (s) = \frac{1}{r!} \sum_{\text{partition $\Pi$ of $\{1, 2, \ldots, r\}$}} c \left( \Pi \right) \zeta \left( s ;\, \Pi \right),
\end{equation}
which gives the meromorphic continuation of $\zeta_r(s)$ to the whole complex plane. 
Here we write 
\[
\zeta \left( s ;\, \Pi \right) = \zeta ( \underbrace{s, \ldots, s}_{r} ;\, \Pi ).
\]
Note that Matsumoto, Matsusaka, and Tanackov \cite[Theorem~4.2]{MMT2022}
proved that $\zeta_r(s)$ has poles only at
$s=1/k$, $1\leq k\leq r$, and gave more precise information
about these poles.

\begin{proof}[Proof of Theorem \ref{thm:main1}.]

Assume the Riemann Hypothesis. 
Let $r \geq 4$.
Let $\gamma$ denote the imaginary part of non-trivial zeros of the Riemann zeta-function. 
Let $\mathcal{D}$ denote the set of all partitions. 
We define $\mathcal{A}, \mathcal{B} \subset \mathcal{D}$ by
\begin{gather*}
\mathcal{A} 
= \left\{ \Pi \in \mathcal{D} \mid \text{there exists $P \in \Pi$ such that $| P | = 1$} \right\}, \\
\mathcal{B} = \mathcal{D} \setminus \mathcal{A} 
= \left\{ \Pi \in \mathcal{D} \mid \text{for any $P \in \Pi$, $ | P | \geq 2$} \right\},
\end{gather*}
where $| P |$ denotes the cardinality of $P$. 
For $\Pi \in \mathcal{B}$, we define the polynomial $P_s^{\Pi}(X)$ by 
\[
P_s^{\Pi} (X)
= \left( \prod_{\substack{P \in \Pi; \\ \ |P| \geq 3}} \zeta \left( | P | s \right) \right) X^{K_\Pi},
\]
where $K_\Pi = \# \left\{ P \in \Pi \mid \left| P \right| = 2 \right\}$. 
Then one has $\zeta(s; \Pi) = P_s^{\Pi} ( \zeta( 2s) )$ for $\Pi \in \mathcal{B}$. 
Letting
\[
P_s (X) = \frac{1}{r!} \sum_{\Pi \in \mathcal{B}}  c \left( \Pi \right) P_s^{\Pi} (X) \AND
Q(s) = \frac{1}{r!} \sum_{\Pi \in \mathcal{A}}  c \left( \Pi \right) \zeta (s; \Pi),
\]
we find
\[
\zeta_r(s) = P_{s} \left( \zeta(2s) \right) + Q (s).
\]
Evaluating $s = 1/2 + i \gamma$, 
one has
\[
\zeta_r \left( \frac{1}{2} + i \gamma \right) 
= P_{ 1/2 + i \gamma } \left( \zeta \left( 1 + 2 i \gamma \right) \right),
\]
since every term of $Q(1/2+i \gamma)$ contains the factor $\zeta(1/2+i \gamma)$. 
Let $\epsilon > 0$ and $z \in \mathbb{C}$ be fixed. 
$P_{1/2}$ has positive degree for $r \geq 4$ since the leading term of $P_{1/2} (X)$ equals
\[
\begin{dcases}
\frac{(-1)^{m}}{2^m m!} X^m & \text{if $r = 2m$ for some positive integer $m$}, \\
\frac{(-1)^{m + 1}}{3 \cdot 2^{m -1} (m - 1)!} \zeta(3/2) X^{m-1}& \text{if $r = 2m+1$ for some positive integer $m$}.
\end{dcases}
\]
Hence there exists a solution $a$ of the equation $P_{1/2} (X) = z$, whose existence is guaranteed by the fundamental theorem of algebra. 

By the telescoping argument, 
we have
\[
\sup_{| w - a | \leq 1} \left| P_{1/2 + i \gamma} (w) - P_{1/2} (w) \right|
\leq C(r, a) \sup_{3 \leq \ell \leq r} \left| \zeta \left( \ell \left( \frac{1}{2} + i \gamma \right) \right) - \zeta \left( \frac{\ell}{2} \right) \right|
\]
for some positive constant $C(r, a)$.
By the absolute convergence of the Euler product, 
there exists a positive integer $M_0 = M_0(\epsilon)$ such that for any $M \geq M_0(\epsilon)$
\[
\sup_{x \geq 3/2,\, y \in \mathbb{R}} \left| \zeta\left( x + iy \right) - \prod_{m = 1}^M \left( 1 - \frac{1}{p_m^{x + iy}} \right)^{-1} \right| < \epsilon. 
\]
Using the inequality $\left| \prod_{j = 1}^n a_j - \prod_{j = 1}^n b_j \right|
\leq R^{n-1} \sum_{j=1}^n \left| a_j - b_j \right|$
for $a_1, \ldots, a_n, b_1, \ldots, b_n \in \mathbb{C}$ satisfying $| a_j | \leq R$, $| b_j | \leq R$, $j = 1, \ldots, n$, and the inequalities
\[
\left| e^{ 2 \pi i x} - 1 \right| 
= \left| e^{2 \pi i \| x \| } - 1 \right| 
=\left| 2 \pi i \| x \| \int_{0}^1 e^{2 \pi i \| x \| \theta} d \theta \right| 
\leq 2 \pi \| x \|
\]
and
\[
\| J x\| 
= \min_{n \in \mathbb{Z}} \left| J x - n \right|
= | J | \min_{n \in \mathbb{Z}} \left| x - \frac{n}{J} \right|
\leq | J | \min_{m \in \mathbb{Z}} \left| x - m \right|
= | J | \| x\| 
\]
for $x \in \mathbb{R}$ and $J \in \mathbb{Z}\setminus \{ 0 \}$, 
we have
\[
\max_{3 \leq \ell \leq r} \left| \prod_{m = 1}^M \left( 1 - \frac{ 1 }{p_m^{\ell(1/2 + i \gamma)}} \right)^{-1} - \prod_{m = 1}^M \left( 1 - \frac{ 1 }{p_m^{\ell /2}} \right)^{-1} \right| 
\leq C(M) \max_{1 \leq m \leq M} \left\| \frac{ \log p_m }{2\pi} \gamma \right\| 
\]
for $M \geq M_0$ with some positive number $C(M)$. 
Hence the triangle inequality yields
\[
\max_{3 \leq \ell \leq r} \left| \zeta \left( \ell \left( \frac{1}{2} + i \gamma \right) \right) - \zeta \left( \frac{\ell}{2} \right) \right|
< 3 \epsilon
\]
for $M \geq M_0$ and $ \max_{1 \leq m \leq M} \left\| ( \log p_m / 2\pi) \gamma \right\|  < \epsilon/C(M)$.

On the other hand, using the continuity of the polynomial function $P_{1/2}$, there exists $0 < \delta_0 = \delta_0(r, a, \epsilon) \leq 1$ such that
\[
\sup_{| w - a | < \delta_0 } \left| P_{1/2} ( w ) - P_{1/2} ( a ) \right| < \epsilon.
\]
Therefore putting $\delta_1 = \min\{ \epsilon/C(M), \delta_0 \}$ and applying Theorem \ref{thm:main2}, 
we see that for any $M \geq M_0$, there exists $\gamma = \gamma(\delta_1, M, \epsilon, r)$ such that 
\[
\max_{1 \leq m \leq M} \left\| \frac{ \log p_m }{2 \pi} \gamma \right\| < \delta_1
\AND \left| \zeta \left( 1 + 2 i \gamma \right) - a \right| < \delta_1.
\]
Hence we deduce
\begin{align*}
\left| \zeta_r \left( \frac{1}{2} + i \gamma \right) - z \right|
&\leq \left| P_{ 1/2 + i \gamma } \left( \zeta \left( 1 + 2 i \gamma \right) \right)  - P_{1/2} \left( \zeta \left( 1 + 2 i \gamma \right) \right) \right| 
+ \left| P_{1/2} \left( \zeta \left( 1 + 2 i \gamma \right) \right) - P_{1/2} ( a ) \right| \\
& \leq \left( 3 C(r, a) + 1 \right) \epsilon
\end{align*}
Since $\epsilon > 0$ is arbitrary, we have the asserted denseness result $r \geq 4$.

Next, we consider the case $r = 3$. 
By the formula \eqref{eqn:HF}, 
we have
\[
\zeta_3(s)
= \frac{1}{6} \zeta(s)^{3} - \frac{1}{2} \zeta(2 \, s) \zeta(s) + \frac{1}{3} \zeta(3 \, s).
\]
Substituting $s = 1/2 + i \gamma$, we have $\zeta_3 \left( 1/2 + i \gamma \right) = \zeta \left( 3/2 + 3 i \gamma \right)/3$. 
Therefore one has
\[
\left| \zeta_3 \left( \frac{1}{2} + i \gamma \right)\right|
\leq \frac{1}{3} \zeta \left( \frac{3}{2} \right).
\]
This completes the proof. 
\end{proof}

\appendix

\section{Value-distribution for the region $\sigma < 1/2$}\label{append:A}

\begin{theorem}\label{thm:NDT}
Let $r \geq 2$ be an integer. 
Then we have the following. 
\begin{enumerate}[{\rm (i)}]

\item Let $0 < \sigma < 1/2$ and assume the Riemann Hypothesis. 
Then $\{\zeta_r ( \sigma + it ) \mid t \in \mathbb{R}\}$ is not dense in $\mathbb{C}$. 

\item For $\sigma \leq 0$, $\{\zeta_r ( \sigma + it ) \mid t \in \mathbb{R}\}$ is not dense in $\mathbb{C}$. 

\end{enumerate}

\end{theorem}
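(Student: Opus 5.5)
Both parts rest on the Hoffman-type formula \eqref{eqn:HF}, which writes $\zeta_r(s)$ as a fixed linear combination of products $\prod_{P\in\Pi}\zeta(|P|s)$. The goal is to show that $|\zeta_r(\sigma+it)|$ either tends to $\infty$ as $|t|\to\infty$, or stays bounded. In either case a suitable disc around $0$ (or around some point) is missed for all large $|t|$. The set $\{\zeta_r(\sigma+it)\}$ is then the union of a compact arc, namely the image of a bounded $t$-range avoiding the finitely many poles at $s=1/k$, and a set staying away from a disc. Such a set cannot be dense: a compact image of a bounded interval is nowhere dense, being a $C^1$ curve, so a small disc can be chosen avoiding both pieces. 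Alternatively I would simply show $|\zeta_r(\sigma+it)|\to\infty$. Note that $\sigma+it$ is never a pole when $t\ne0$.

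The first step is to isolate the dominant term. It is the partition into singletons, contributing $\zeta(s)^r/r!$. Every other partition contains a block of size $k\ge2$, and its term is $\zeta(s)^{j}\prod\zeta(k_i s)$ with $j<r$.

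For (ii), $\sigma\le0$, I use the functional equation $\zeta(s)=\chi(s)\zeta(1-s)$ with $|\chi(\sigma+it)|\asymp |t|^{1/2-\sigma}$. For $k\sigma<0$ we get $|\zeta(k(\sigma+it))|\asymp |t|^{1/2-k\sigma}|\zeta(1-k\sigma-ikt)|$, and here $\Re(1-k\sigma)\ge1$. When $\sigma<0$, strictly away from $1$, the factor $\zeta(1-k\sigma-ikt)$ is bounded above and below by positive constants (Euler product). Hence each term has size $\asymp |t|^{\sum_{P}(1/2-|P|\sigma)}=|t|^{\ell/2-r\sigma}$, where $\ell$ is the number of blocks. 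The singleton partition has the maximal exponent $r/2-r\sigma$, strictly larger than all others, so $|\zeta_r|\to\infty$. The case $\sigma=0$ needs care since $\zeta(1+it)$ can be small or large. Still, $|\zeta(1+it)|\gg 1/\log t$ and $|\zeta(1+ikt)|\ll\log t$, so the exponent gap of $1/2$ beats powers of $\log$. Terms with blocks of size $\ge2$ at $\sigma=0$ involve $\zeta(ikt)$, which is likewise $\ll |t|^{1/2}\log t$. So $|\zeta_r(it)|\gg |t|^{r/2}(\log t)^{-r}$ dominates everything else, and $|\zeta_r(it)|\to\infty$.

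For (i), $0<\sigma<1/2$ under RH, I would follow Garunk\v{s}tis--Steuding. Under RH one has $|\zeta(1-\sigma-it)|\gg \exp(-C\log t/\log\log t)=t^{-o(1)}$ for fixed $1-\sigma>1/2$, and $|\zeta(u+iv)|\ll t^{o(1)}$ for $u>1/2$ (Lindel\"of, which follows from RH). Hence $|\zeta(\sigma+it)|=|\chi|\,|\zeta(1-\sigma-it)|\ge |t|^{1/2-\sigma-o(1)}$. For block size $k\ge2$ the factor $\zeta(k\sigma+ikt)$ needs separate handling. If $k\sigma>1/2$ it is $t^{o(1)}$. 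If $k\sigma\le1/2$, convexity and the functional equation give at most $|t|^{1/2-k\sigma+o(1)}$, and still $\le t^{1/2-k\sigma+o(1)}$ at $k\sigma=1/2$. Then the singleton term has size $\ge t^{r(1/2-\sigma)-o(1)}$, while any term with blocks $P$ is $\le t^{\sum_P \max(0,1/2-|P|\sigma)+o(1)}$. The main obstacle is checking the strict inequality $\max(0,1/2-k\sigma)<k(1/2-\sigma)$ for $k\ge2$. Since $k(1/2-\sigma)-(1/2-k\sigma)=(k-1)/2>0$ and $k(1/2-\sigma)>0$, this holds, with a fixed positive gap $\ge \min((k-1)/2,\,k(1/2-\sigma))$ that absorbs the $t^{o(1)}$ losses. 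Therefore $|\zeta_r(\sigma+it)|\to\infty$ as $|t|\to\infty$, and non-denseness follows as in the first paragraph. The step needing most care is making the RH lower bound for $|\zeta(1-\sigma-it)|$ uniform and quoting it correctly.
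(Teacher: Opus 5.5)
Your proposal is correct and follows essentially the same route as the paper. Both use the Hoffman formula and show that the singleton partition dominates: the Garunk\v{s}tis--Steuding-type lower bound for $|\zeta(\sigma+it)|$ (which you rederive from the functional equation together with RH, or with $1/\zeta(1+it)\ll\log t$ and the Euler product) beats the Lindel\"of/functional-equation upper bounds for the other terms, and a finite-length-curve argument then handles bounded $t$. One point should be made explicit: when $\sigma=1/k$ with $3\le k\le r$, there may be a pole at $t=0$, so, as the paper does, you must cut out a small interval $|t|<\delta$ on which $|\zeta_r(\sigma+it)|$ is already large before using compactness of the remaining arc.
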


To prove the theorem, we quote the following lemma \cite[Lemma 4]{GS2014}.
\begin{lemma}\label{GS-Lem4}
We have the following. 
\begin{enumerate}[{\rm (i)}]

\item Let $0 < \sigma < 1/2$ and assume the Riemann Hypothesis. 
Then for any $\epsilon > 0$ there exists a $c = c(\epsilon, \sigma) > 0$ such that
\[
\left| \zeta \left( \sigma + it \right) \right| 
\geq c | t |^{1/2 - \sigma - \epsilon}
\]
for $| t | \geq 2$. 

\item Let $\sigma \leq 0$.
Then there exists a constant $c=c(\sigma)>0$ such that
\[
 \left|\zeta(\sigma+it)\right|
 >
 \frac{c|t|^{1/2-\sigma}}{\log |t|}
\]
for $|t|\geq 2$. 
\end{enumerate}
\end{lemma}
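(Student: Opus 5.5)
The plan is to reduce both parts to a lower bound for $|\zeta|$ to the right of the critical line, using the functional equation $\zeta(s)=\Delta(s)\zeta(1-s)$ with
\[
\Delta(s)=2^s\pi^{s-1}\sin\left(\frac{\pi s}{2}\right)\Gamma(1-s).
\]
Then $|\zeta(\sigma+it)|=|\Delta(\sigma+it)|\,|\zeta(1-\sigma-it)|$, and $|\zeta(1-\sigma-it)|=|\zeta(1-\sigma+it)|$ by the reflection principle.

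The first step is the standard Stirling estimate
\[
|\Delta(\sigma+it)|=\left(\frac{|t|}{2\pi}\right)^{1/2-\sigma}\bigl(1+O(|t|^{-1})\bigr),
\]
valid uniformly for $\sigma$ in a fixed compact set and $|t|\geq 2$. This already supplies the factor $|t|^{1/2-\sigma}$ in both (i) and (ii). It remains to bound $|\zeta(\sigma'+it)|$ from below, where $\sigma'=1-\sigma$.

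For (ii) we have $\sigma'\geq 1$. If $\sigma'>1$, the Euler product gives
\[
|\zeta(\sigma'+it)|\geq\prod_p\left(1+p^{-\sigma'}\right)^{-1}=\frac{\zeta(2\sigma')}{\zeta(\sigma')}>0,
\]
which is a bound uniform in $t$ and in fact stronger than needed. If $\sigma'=1$, that is $\sigma=0$, we invoke the classical unconditional bound $|\zeta(1+it)|\gg 1/\log|t|$ for $|t|\geq 2$, coming from the de la Vallée Poussin zero-free region (Titchmarsh, \S3.6). In both cases, combining with the estimate for $|\Delta|$ gives $|\zeta(\sigma+it)|>c|t|^{1/2-\sigma}/\log|t|$.

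For (i) we have $1/2<\sigma'<1$, and the needed input is conditional. Under the Riemann Hypothesis one has, for fixed $1/2<\sigma'<1$ and large $|t|$,
\[
\log|\zeta(\sigma'+it)|\geq -C(\sigma')\frac{(\log|t|)^{2-2\sigma'}}{\log\log|t|}
\]
(Titchmarsh, \S14.2--14.5). This is proved by applying Borel--Carathéodory and Hadamard three-circles to $\log\zeta$, which is holomorphic in $\Re s>1/2$ under RH. Since $2-2\sigma'<1$, the right-hand side is $\geq -\epsilon\log|t|$ for $|t|$ large, so $|\zeta(\sigma'+it)|\geq|t|^{-\epsilon}$. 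The finitely many remaining $t$ with $2\leq|t|\leq T_0$ are absorbed into the constant $c$, because $\zeta$ has no zeros on the segment $\Re s=\sigma<1/2$ under RH. Multiplying by $|\Delta|$ yields $|\zeta(\sigma+it)|\geq c|t|^{1/2-\sigma-\epsilon}$.

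The main obstacle is this RH-conditional lower bound for $|\zeta(\sigma'+it)|$ inside the critical strip. Everything else is Stirling's formula and the Euler product; I would quote the bound from Titchmarsh rather than reprove it.
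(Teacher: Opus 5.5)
The paper does not prove this lemma. It quotes it verbatim from Garunk\v{s}tis and Steuding (Lemma 4 of their 2014 paper), so there is no internal argument to compare against; your proof supplies what the paper takes on citation, and it is correct.

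Your route is the standard one. The functional equation and Stirling give $|\Delta(\sigma+it)|\asymp_\sigma |t|^{1/2-\sigma}$ for $|t|\ge 2$, since $\Delta$ has no zeros or poles off the real axis. This reduces everything to a lower bound for $|\zeta(1-\sigma+it)|$:
\begin{itemize}
\item in (ii), you get it from the Euler product, or from $1/\zeta(1+it)\ll\log|t|$ when $\sigma=0$;
\item in (i), you get it from the RH consequence $1/\zeta(s)\ll_\epsilon |t|^{\epsilon}$ for fixed $\Re s>1/2$ (Titchmarsh \S14.2).
\end{itemize}
Your version makes explicit what the citation hides. Part (ii) is unconditional, and for $\sigma<0$ it holds without the $\log|t|$ loss. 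RH enters (i) only through that single bound on the line $\Re s=1-\sigma$.

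One wording fix: in (i), the leftover range $2\le|t|\le T_0$ is a compact interval, not ``finitely many $t$''. The positive lower bound there comes from continuity together with $\zeta(\sigma+it)\neq0$ under RH. In (ii) with $\sigma=0$, it comes likewise from $\zeta(1+it)\neq 0$.
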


\begin{proof}[Proof of Theorem \ref{thm:NDT}]
Let $\epsilon$ be a small positive number.
First, we consider the case $0 < \sigma < 1/2$ and assume the Riemann Hypothesis. 
Since the Riemann Hypothesis implies the Lindel\"of Hypothesis, the functional equation gives
\begin{align}\label{est:LH}
\left| \zeta \left( k ( \sigma + it ) \right) \right|
&\ll
\begin{dcases}
\left| t \right|^{1/2 - k \sigma + \epsilon/r} & \text{if $0 \leq k \sigma \leq 1/2$}, \\
\left| t \right|^{\epsilon/r} & \text{if $1/2 < k \sigma$}
\end{dcases} \nonumber \\
& \leq \left| t \right|^{1/2 - \sigma + \epsilon/r}
\end{align}
for $1 \leq k \leq r$ and $| t | \geq 2$.
Let $\mathcal{D}$ denote the set of all partitions of $\left\{ 1, \ldots, r \right\}$ and put $\Pi_0 = \left\{ \{ 1 \}, \ldots, \{r\} \right\}$. 
Then one has
\[
\zeta \left( \sigma + i t; \Pi \right)
\ll | t |^{ \ell (1/2 - \sigma) + \epsilon }
\leq | t |^{ ( r - 1 ) (1/2 - \sigma) + \epsilon }
\]
for any $\Pi = \{ P_1, \ldots, P_\ell \} \in \mathcal{D} \setminus \{ \Pi_0 \}$ and $| t | \geq 2$ by the estimate \eqref{est:LH}, and 
\[
\zeta \left( \sigma + i t; \Pi_0 \right) 
= \zeta ( \sigma + i t )^r
\gg | t |^{ r(1/2 -  \sigma ) - \epsilon }
\]
for $| t | \geq 2$ by the estimate (i) in Lemma \ref{GS-Lem4}. 
Hence we obtain
\begin{align*}
\left| r! \, \zeta_r ( \sigma + i t ) \right|
&\geq \left| \zeta ( \sigma + i t ) \right|^r - \sum_{\Pi \in \mathcal{D} \setminus \{ \Pi_0 \} } \left| c (\Pi) \right| \left| \zeta \left( \sigma + i t; \Pi \right) \right| \\
&\gg | t |^{r (1/2 - \sigma) - \epsilon } + O \left( | t |^{ ( r - 1 ) (1/2 - \sigma) + \epsilon } \right) \\
&= | t |^{r (1/2 - \sigma) - \epsilon } \left( 1 + O\left( | t |^{ - ( 1/2 - \sigma ) + 2 \epsilon } \right) \right)
\end{align*}
for $| t | \geq 2$. 
Therefore there exists $T_0 > 0$ and $C_0 > 0$ such that the estimate $\left| \, \zeta_r ( \sigma + i t ) \right| > C_0$ holds for $| t | > T_0$. 
If $\zeta_r (s)$ does not have any pole at $s = \sigma$, the curve $\zeta_r( \sigma + it )$, $t \in [-T_0, T_0]$ is not dense in the disc $|z| \leq C_0$ since it has finite length. 
If $\zeta_r (s)$ has a pole at $s = \sigma$, we take $\delta > 0$ such that $ \left| \zeta_r( \sigma + it ) \right| > C_0$ for $| t | < \delta$. Then the curve $\zeta_r( \sigma + it )$, $t \in [-T_0, - \delta] \cup [\delta, T_0]$ is not dense in the disc $|z| \leq C_0$ by the same argument. 
This completes the case $0 < \sigma < 1/2$. 

Next, we consider the case $\sigma \leq 0$.
By the functional equation, the Stirling formula and the estimate $\zeta(x + iy) \ll \log | y |$ for $x \geq 1$ and $|y| \geq 2$ (see \cite[Theorem 3.5]{T1986}), we have 
\[
\left| \zeta \left( k ( \sigma + it ) \right) \right|
\ll \left| t \right|^{1/2 - k \sigma} \log | t | 
\ll \left| t \right|^{1/2 - k \sigma  + \epsilon /r}
\]
for $1 \leq k \leq r$ and $| t | \geq 2$. 
Therefore one has
\[
\zeta \left( \sigma + i t; \Pi \right) 
\ll | t |^{ (r-1) / 2 - r  \sigma + \epsilon}
\]
for any $\Pi = \{ P_1, \ldots, P_\ell \} \in \mathcal{D} \setminus \{ \Pi_0 \}$ and $| t | \geq 2$. 
By Lemma \ref{GS-Lem4} (ii), one has
\[
\zeta \left( \sigma + i t; \Pi_0 \right) 
= \zeta ( \sigma + i t )^r
\gg | t |^{ r(1/2 -  \sigma ) - \epsilon }.
\]
Thus, we obtain
\begin{align*}
\left| r! \, \zeta_r ( \sigma + i t ) \right|
&\geq \left| \zeta ( \sigma + i t ) \right|^r - \sum_{\Pi \in \mathcal{D} \setminus \{ \Pi_0 \} } \left| c (\Pi) \right| \left| \zeta \left( \sigma + i t; \Pi \right) \right| \\
&\gg | t |^{r (1/2 - \sigma) - \epsilon} + O \left( | t |^{ (r-1) / 2 - r  \sigma + \epsilon} \right) \\
&= | t |^{r (1/2 - \sigma ) - \epsilon} \left( 1 + O\left( | t |^{ - 1/2 + 2 \epsilon } \right) \right)
\end{align*}
for $| t | \geq 2$ by the estimate (ii) in Lemma \ref{GS-Lem4}.
Hence one can deduce the conclusion from an argument similar to the case $0 < \sigma < 1/2$. 
This completes the proof. 
\end{proof}

\subsection*{Acknowledgments}
I am grateful to Dr. Keita Nakai for bringing several relevant papers to my attention and to Professor Hidehiko Mishou for his valuable advice.
This work was supported by JSPS KAKENHI Grant Number JP25K17244.

\subsection*{Declaration on the Use of Generative AI}
The author used OpenAI's GPT-6 Astra to assist with language editing and with the detection and correction of typographical errors in mathematical expressions.
All mathematical content, including the ideas, results, and proofs, is the author's own, and the author takes full responsibility for the content of this paper.

\vspace{1em}

\begin{flushleft}
{\footnotesize
{\sc
Department of Electronic and Information Engineering, \\
National Institute of Technology, Suzuka College,\\
Shiroko-cho, Suzuka, Mie, 510-0294, Japan
}\\
{\it E-mail address}: {\tt endo-k@info.suzuka-ct.ac.jp}
}
\end{flushleft}

\end{document}